\documentclass[12pt, reqno]{amsart}
\usepackage{amssymb,dsfont}
\usepackage{eucal}
\usepackage{amsmath}
\usepackage{amscd}
\usepackage[dvips]{color}
\usepackage{multicol}
\usepackage[all]{xy}           
\usepackage{graphicx}
\usepackage{color}
\usepackage{colordvi}
\usepackage{xspace}
\usepackage{txfonts}
\usepackage{lscape}
\usepackage{tikz} 
\usepackage{bm}

\numberwithin{equation}{section}
\usepackage[shortlabels]{enumitem}
\usepackage{ifpdf}
\ifpdf
 \usepackage[colorlinks,final,backref=page,hyperindex]{hyperref}
\else
\fi
\usepackage{tikz}
\usepackage[active]{srcltx}

\makeatletter

\newtheorem{theorem}{Theorem}[section]
\newtheorem{definition}[theorem]{Definition}
\newtheorem{lemma}[theorem]{Lemma}
\newtheorem{proposition}[theorem]{Proposition}
\newtheorem{remark}[theorem]{Remark}

\numberwithin{equation}{section}

\def\({\left(}
\def\){\right)}
\def\al{\alpha}
\def\be{\beta}
\def\dt{\delta}
\def\gm{\gamma}
\def\vf{\varphi}
\def\g{\mathfrak{g}}
\def\U{\mathcal{U}}
\newcommand{\C}{\mathbb C}
\newcommand{\N}{\mathbb{N}}
\newcommand{\Z}{\mathbb{Z}}
\newcommand{\spanc}[1]{\mathrm{Span}_{\C}\left\{#1\right\}}
\newcommand{\ov}[1]{\overline{#1}}

\def\im{\ov{\bm1}}
\def\T{{\mathbb{T}}}
\def\M{\mathbb M}
\def\bmd{{\bm d}}
\def\bme{{\bm e}}
\def\mtvad{{M}_\T\(V,a,\bmd\)}
\def\mftvad{{M}^f_\T\(V,a,\bmd\)}
\def\mgvadp{{M}_\g\(V,a,\bmd,P\)}
\def\mggvadp{M^g_\g\(V,a,\bmd,P\)}
\def\mtwbe{{M}_\T\(W,b,\bme\)}
\def\mgwbeq{{M}_\g\(W,b,\bme,Q\)}
\def\mind{\min\{\bmd\}}
\def\mine{\min\{\bme\}}
\def\mindp{\min\{\bmd,P\}}

\def\td{\T_{\mind}}
\def\te{\T_{\mine}}
\def\tdp{\T_{\mindp}}
\renewcommand{\P}{\mathcal P}
\def\Q{\mathcal Q}

\newcommand{\eleid}[2]{I_{#1+d_{#2}}}
\newcommand{\opwijlm}[1]{\Omega_{l,m}^{(i,j,#1)}}

\begin{document}
\title[Weight modules over Heisenberg-Virasoro and gap-$p$ Virasoro algebras            ]{A class of weight modules over the twisted Heisenberg-Virasoro algebra and gap-$p$ Virasoro algebras}
  \author{Chengkang Xu}
  \address{C. Xu: School of Mathematical Sciences, Shangrao Normal Collage, Shangrao, Jiangxi, P. R. China}                                                                     \email{xiaoxiongxu@126.com}
  \author{Fen Zhang}
  \address{F. Zhang: School of Mathematical Sciences, Shangrao Normal Collage, Shangrao, Jiangxi, P. R. China}                                                                     \email{1024866868@qq.com, Corresponding author}

\date{}
 \keywords{twisted Heisenberg-Virasoro algebra, mirror Heisenberg-Virasoro algebra, gap-$p$ Virasoro algebra,  weight module}
  \subjclass[2020]{17B10, 17B65, 17B68}
\maketitle

\begin{abstract}
In this paper, we construct a class of simple weight modules over the twisted Heisenberg-Virasoro algebra and gap-$p$ Virasoro algebras from restricted modules over some positive part subalgebra of the twisted Heisenberg-Virasoro algebra.
These modules are new.
In particular, when $p=2$, the gap-$p$ Virasoro algebra is the mirror Heisenberg-Virasoro algebra and we obtain many new simple weight modules for the mirror Heisenberg-Virasoro algebra.
\end{abstract}

\section{Introduction}

Representation theory of the twisted Heisenberg-Virasoro algebra $\T$ has been extensively studied for the past decades.
In \cite{LZ1}, irreducible Harish-Chandra modules were classified for $\T$. They are modules of intermediate series, highest weight modules or lowest weight modules.
Tensor products of modules of intermediate series and the highest weight modules were studied in \cite{LZ2}.
Non-weight modules over $\T$ have also been considered by many mathematicians.
Generalized oscillator representations were constructed using restricted modules over the Heisenberg subalgebra in \cite{LZ2}.
Restricted modules over $\T$ were classified in \cite{CG1,Gao} and \cite{TYZ} separately.
They are essentially tensor products of restricted modules over the Virasoro algebra and restricted modules over the Heisenberg algebra,
or modules induced from some finite dimensional solvable subquotients of $\T$.

The mirror Heisenberg-Virasoro algebra was first studied in \cite{Ba} and named in \cite{LPXZ}. Its structure is similar to the twisted Heisenberg-Virasoro algebra.
In \cite{LPXZ}, the authors classified irreducible Harish-Chandra modules for the mirror Heisenberg-Virasoro algebra $\M$,
which are modules of intermediate series, highest weight modules or lowest weight modules.
They also classified irreducible restricted modules of level zero for $\M$.
Irreducible restricted modules of nonzero-level for $\M$ are classified in \cite{TYZ}.
Like the twisted Heisenberg-Virasoro algebra,
irreducible restricted modules for $\M$ are some tensor product modules or modules induced from some finite dimensional solvable subquotients of $\M$.
Tensor products of modules of intermediate series and highest weight modules of $\M$ were studied in \cite{GZ}.

Let $p>1$ be an integer.
The gap-$p$ Virasoro algebra $\g$ was first introduced in \cite{Xu}, where irreducible Harish-Chandra modules were classified.
They are modules of intermediate series, highest weight modules or lowest weight modules.
The algebra $\g$ has a close relation to the twisted Heisenberg-Virasoro algebra and the algebra of derivations over a rational quantum torus.
Moreover, when $p=2$, $\g$ is isomorphic to the mirror Heisenberg-Virasoro algebra $\M$ (see Section \ref{sec5}).
Irreducible restricted modules of level zero are classified in \cite{GX}.
They are modules induced from some finite dimensional solvable subquotients of $\g$.
Recently, some non-weight $\g$-modules were classified in \cite{XCT},
including Whittaker modules and $\U(\C L_0)$-free modules.

In \cite{CHS}, a class of simple weight modules over the twisted Heisenberg-Virasoro algebra $\T$ with trivial central actions were constructed from irreducible restricted modules over some positive part subalgebras of $\T$.
In this paper, we use the same mechanism to construct simple weight modules with trivial central actions for the twisted Heisenberg-Virasoro algebra $\T$, the gap-$p$ Virasoro algebras (in particular, for the mirror Heisenberg-Virasoro algebra $\M$) from irreducible restricted modules over some positive part subalgebras of $\T$.
The modules constructed here for $\T$ generalize the ones in \cite{CHS}.
We determine the irreducibilities of these newly constructed modules and their isomorphism classes.
We also construct some non-weight modules for $\T$ and $\g$ by using a twisting technique
from \cite{LGZ}.

Let us now briefly describe how this paper is organized.
In Section 2, we recall the algebras $\T$ and $\g$, and introduce some useful operators in $\U(\g)$ and $\U(\T)$.
In Section 3, we give a construction of the above-mentioned modules for $\T$ and $\g$, and prove that these modules are irreducible under a minor condition.
Section 4 is dedicated to determine the isomorphism classes of the modules over $\T$ and $\g$,
and Section 5 is to demonstrate that these modules are new.
In the last section, we use the modules obtained in Section 3 to construct some non-weight modules for $\T$ and $\g$.

Throughout this paper, the symbols $\Z$, $\N$, $\Z_+$, $\C$, and $\C^\times$ are to denote the sets of integers, non-negative integers, positive integers, complex numbers, and nonzero complex numbers, respectively.
All vector spaces and algebras are over $\C$ and the universal enveloping algebra of a Lie algebra $\mathcal L$ is denoted by $\U(\mathcal L)$.
Finally, we fix a positive integer $p>1$ for this paper,
and for any $k\in\Z$, denote by $\ov k$ the residue of $k$ by $p$.

\section{Preliminaries}
In this section, we recall the twisted Heisenberg-Virasoro algebra and gap-$p$ Virasoro algebra, and introduce some related notations.

\begin{definition}
The \textbf{twisted Heisenberg-Virasoro} is a complex Lie algebra $\T$ with a basis
$$\left\{\,L_n,I_n, C_L, C_{LI}, C_I\, \mid\, n\in\Z\,\right\}$$
and Lie brackets
\begin{equation} \label{HV}
 \begin{aligned}
  &[L_m,L_n]=(n-m)L_{m+n}+\frac1{12}(m^3-m)C_L\dt_{m+n,0}, \\
  &[L_m,I_n]=nI_{m+n}+(m^2+m)C_{LI}\dt_{m+n,0}, \ \quad
   [I_m,I_n]=mC_I\dt_{m+n,0},
 \end{aligned}
\end{equation}
for $m,n\in\Z$ and $C_L, C_{LI}, C_I$ being central.
\end{definition}

For any $n\in\N$, the subspace
\[\T_n=\spanc{L_k,I_{n+k}\,\mid\,k\in\N}\]
is a subalgebra of $\T$.
\begin{definition}\label{Restrict}
A module $V$ over the Lie algebra $\T_n$ is called \textbf{restricted}
if for any $v\in V$, there exists $N\in\N$ such that $L_m v=I_mv=0$ for all $m\geq N.$
Let $h,q\in\N$ be such that $h\geq n$.
We say $V$ has the \textbf{annihilating level} $(h,q)$ if
$I_hV\neq 0, L_qV\neq 0$ and $I_kV=L_lV=0$ for all $k>h, l>q$.
\end{definition}

\begin{remark}\label{rmk:Ih}
 (1) Let $V$ be an irreducible $\T_n$-module with annihilating level $(h,q)$.
 Then $I_h$ acts on $V$ as a linear isomorphism (see \cite[Lemma 3.1]{CHS}).\\
 (2) An example of irreducible restricted $\T_0$(or $\T_1$)-module of a fixed annihilating level was given in \cite[Example 3.4]{CHS}, and
 a general construction of such modules was presented in \cite[section 4]{CG1}.
\end{remark}

The gap-$p$ Virasoro algebra may be realized as the universal central extension of some subalgebra of the centerless twisted Heisenberg-Virasoro algebra
\[\T'=\spanc{L_k,I_k\,\mid\,k\in\Z}\]
satisfying the Lie brackets in \eqref{HV} with $C_L, C_{LI}, C_I$ omitted (see \cite{Xu}).

\begin{definition}
The \textbf{gap-$p$ Virasoro algebra} is a complex Lie algebra $\g$ with a basis
$$\left\{L_{pn},I_{pn+i}, C_j\, \mid \, n\in\Z, 1\leq i\leq p-1, 0\leq j\leq [\frac p2] \right\}$$
satisfying the following Lie brackets
\begin{align*}
  &[L_{pm},L_{pn}]=p(n-m)L_{pm+pn}+\frac1{12}(m^3-m)C_0\dt_{m+n,0},            \\
  &[I_{pm+i},I_{pn+j}]=(pm+i)\dt_{i+j,p}\dt_{m+n+1,0} C_{\min\{i,p-i\}},
  \ \ \ \ \ [C_k, \g]=0,             \\
  &[L_{pm},I_{pn+i}]=(pn+i)I_{pm+pn+i},
\end{align*}
where $m,n\in\Z$, $1\leq i,j\leq p-1$, $0\leq k\leq [\frac p2]$, and
$[\frac p2]$ denotes the largest integer that is less than or equal to $\frac p2$.
\end{definition}

\begin{remark}
(1) When $p=2$, the gap-2 Virasoro algebra is exactly the mirror Heisenberg-Virasoro algebra introduced in \cite{Ba} and named in \cite{LPXZ}.\\
(2) Since the modules studied in this paper have trivial central actions,
we may consider $\g$ as a subalgebra of $\T$ by neglecting the central elements
$C_L,C_{LI},C_I, C_i, 0\le i\le [\frac p2]$.
This is also the reason for us to denote elements in $\T$ and $\g$ in a same fashion.
\end{remark}

\begin{definition}
A module $V$ over the Lie algebra $\T$ or $\g$ is called a \textbf{weight module}
if $V$ can be decomposed as a direct sum of eigenspaces under the $L_0$-action.
\end{definition}

To end this section we introduce some operators in $\U(\g)$ and $\U(\T)$  for later use.
For any $s\in\N, l,m\in\Z$ and $1\le i,j\le p-1$, define
\begin{equation}\label{defopw}
\opwijlm s=\sum_{k=0}^s(-1)^{s-k}\binom sk I_{pl-pm-pk+i} I_{pm+pk+j}.
\end{equation}
Clearly, $\opwijlm 0=I_{pl-pm+i} I_{pm+j}$ and it is easy to prove the following recursive formula
\begin{equation}\label{recformula}
\opwijlm s=\Omega_{l,m+1}^{(i,j,s-1)}-\opwijlm {s-1}\quad\text{ for }s>0.
\end{equation}

\section{Constructing weight modules}\label{sec3}

In this section we construct some weight modules over $\T$ and $\g$ from restricted $\T_0$ (or $\T_1$)-modules, and determine their irreducibility.

Fix a $p$-tuple $\bmd=(d_0,d_1,\dots, d_{p-1})\in\{0,1\}^p$
and a subset $P$ of $\{0,1,\dots, {p-1}\}$ in this section.
Denote
\[\mind=\min\{d_0,d_1,\dots, d_{p-1}\}\quad\text{ and }\quad
   \mindp=\min\{d_i\mid i\in P\}.\]
Then we have subalgebras $\td$ and $\tdp$ of $\T$, and $\tdp\subseteq \td$.

Let $a\in\C$, $V$ be a restricted $\td$-module and let $\C[x^{\pm1}]$ denote the Laurent polynomial ring in the indeterminant $x$.
We define a $\T$-action on the space $V\otimes\C[x^{\pm1}]$ by
\begin{equation} \label{T-action}
\begin{aligned}
 &L_m\cdot v(k)=\(a+k+\sum_{j=0}^\infty\frac{m^{j+1}}{(j+1)!}L_j\)v(m+k);\\
 &I_{pm+i}\cdot v(k)=\sum_{j=0}^\infty\frac{(pm+i)^{j+d_i}}{(j+d_i)!}\eleid ji v(pm+i+k);\\
 &C_L\cdot v(k)=C_{LI}\cdot v(k)=C_I\cdot v(k)=0,
\end{aligned}
\end{equation}
where $m,k\in\Z, v\in V, 0\le i\le p-1$ and $v(k)=v\otimes x^k$.

Denote
\[\C[P]=\bigoplus_{r\in P}x^r\C[x^{\pm p}]\subseteq\C[x^{\pm1}]\quad\text{ and }\quad
   \P=\left\{k\in\Z\mid\ov k\in P\right\}
     =\left\{k\in\Z\mid x^k\in \C[P]\right\}.\]
Consider $V$ as a $\tdp$-module and
we define a $\g$-action on the space $V\otimes\C[P]$ by
\begin{equation} \label{g-action}
\begin{aligned}
 &L_{pm}\cdot v(k)=\(a+k+\sum_{j=0}^\infty\frac{(pm)^{j+1}}{(j+1)!}L_j\)v(pm+k);\\
 &I_{pm+i}\cdot v(k)=
   \dt_{\ov{i+k},P}\sum_{j=0}^\infty\frac{(pm+i)^{j+d_i}}{(j+d_i)!}\eleid ji v(pm+i+k);\\
 &C_l\cdot v(k)=0,
\end{aligned}
\end{equation}
where $m\in\Z, k\in\P, v\in V, 1\le i\le p-1,0\le l\le[\frac p2]$ and
$\dt_{j,P}$ is the Kronecker symbol defined by
\[\dt_{j,P}=\begin{cases}
               1 &\text{ if }j\in P;\\
               0 &\text{ if }j\notin P.
            \end{cases}
\]

Since $V$ is restricted, the actions in \eqref{T-action} and \eqref{g-action}
contain only finitely many terms, hence make sense.
Furthermore, we have the following
\begin{proposition}
(1) The actions in \eqref{T-action} make $V\otimes\C[x^{\pm1}]$ a weight $\T$-module.\\
(2) The actions in \eqref{g-action} make $V\otimes\C[P]$ a weight $\g$-module.
\end{proposition}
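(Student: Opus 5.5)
The plan is to verify the defining brackets directly on basis vectors $v(k)$, and to reduce every verification to one identity between formal power series. For this I introduce, for $m,n\in\Z$ and $0\le i\le p-1$, the operators on $V$
\[A_m=\sum_{j\ge0}\frac{m^{j+1}}{(j+1)!}L_j,\qquad B^{(i)}_n=\sum_{j\ge0}\frac{n^{j+d_i}}{(j+d_i)!}I_{j+d_i}.\]
These are finite sums on each vector because $V$ is restricted. In this notation \eqref{T-action} reads $L_m v(k)=((a+k)+A_m)v(m+k)$ and $I_{pm+i}v(k)=B^{(i)}_{pm+i}v(pm+i+k)$.

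\textbf{Weight property.} Since $A_0=0$, $L_0$ acts on $v(k)$ by the scalar $a+k$. Hence $V\otimes\C[x^{\pm1}]=\bigoplus_k V\otimes x^k$ is an $L_0$-eigenspace decomposition. The same argument applies to $V\otimes\C[P]$, with $k\in\P$.

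\textbf{Reduction to a series identity.} Computing $[L_m,L_n]v(k)$ gives
\[\big((n-m)(a+k)+nA_n-mA_m+[A_m,A_n]\big)v(m+n+k).\]
So the first bracket in \eqref{HV}, with $C_L=0$, amounts to
\[[A_m,A_n]=(n-m)A_{m+n}-nA_n+mA_m.\]
To prove this I use the Lie homomorphism from $\spanc{L_j\mid j\ge0}$ to formal vector fields, $L_j\mapsto \frac{t^{j+1}}{?}$-type fields normalised so that $[f\partial_t,g\partial_t]=(fg'-f'g)\partial_t$ matches $[L_i,L_j]=(j-i)L_{i+j}$. Under this map $A_m\mapsto (e^{mt}-1)\partial_t$. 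The required identity then becomes
\[(e^{mt}-1)(e^{nt}-1)'-(e^{mt}-1)'(e^{nt}-1)=(n-m)(e^{(m+n)t}-1)-n(e^{nt}-1)+m(e^{mt}-1),\]
which is a direct check. Comparing coefficients of $t^{j+1}$ turns the series identity back into the identity in $\U(\td)$.

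Similarly, I send $I_j$ to the function $t^j$, so that $[L_j,I_l]=lI_{j+l}$ becomes the action $f\partial_t\cdot g=fg'$. Then $B^{(i)}_n\mapsto e^{nt}-d_i$. The relation $[I_l,I_{l'}]=0$ on the module holds because all $I_j$ with $j\ge\mind$ commute in $\td$ once the central terms are dropped. For $[L_n,I_l]$ with $l=pm+i$, the commutator on $v(k)$ is $lB^{(i)}_l+[A_n,B^{(i)}_l]$. Under the dictionary this corresponds to
\[l(e^{lt}-d_i)+(e^{nt}-1)\,l e^{lt}=l\big(e^{(n+l)t}-d_i\big).\]
This must equal $lB^{(i')}_{n+l}\mapsto l\big(e^{(n+l)t}-d_{i'}\big)$, where $i'=\ov{n+l}$.

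\textbf{Main obstacle.} The step I expect to be hardest is exactly this last comparison, because $L_n$ need not preserve the residue class mod $p$. The two sides differ by the term $l(d_{i'}-d_i)I_0$. When $\mind=1$, the element $I_0$ is not in $\td$ and the constant terms simply do not occur. When $\mind=0$, I first note that $I_0$ is central in $\T_0$ once the central charges are dropped. I would then check whether the construction forces $I_0V=0$ whenever $\bmd$ is non-constant, or whether the intended convention excludes the $I_0$ term; this is the point to verify carefully against the paper's setup.

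\textbf{The $\g$ case.} Here $L_{pm}$ preserves residues, so $d_{i'}=d_i$ and no such discrepancy arises. The factor $\dt_{\ov{i+k},P}$ is compatible with the brackets because $L_{pm}$ preserves $\P$. For $[I_{pm+i},I_{pn+j}]$, both orderings vanish or survive together, and when they survive the operators commute, which is consistent with the trivial central action. Finally, $\tdp\subseteq\td$ guarantees that only elements of $\tdp$ appear in \eqref{g-action}.
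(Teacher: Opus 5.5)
Your dictionary $L_j\mapsto t^{j+1}\partial_t$, $I_j\mapsto t^j$ is a correct and tidier packaging of the paper's double-sum reindexing. It yields $[A_m,A_n]=(n-m)A_{m+n}-nA_n+mA_m$ at once, and it shows that $[L_n,I_l]$, $l=pm+i$, acts on $v(k)$ by $l\sum_{j\ge0}\frac{(n+l)^{j+d_i}}{(j+d_i)!}I_{j+d_i}$, which is the paper's penultimate expression. But the proposal does not prove the proposition. In both places where it goes wrong, the defect lies in the statement itself, and the paper's proof misses it too.

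In part (1), the discrepancy $l(d_{i'}-d_i)I_0$ you isolated is genuine; nothing in the setup forces $I_0V=0$. Take $p=2$, $\bmd=(0,1)$ and $V=\C v$ with $L_0v=bv$, $I_0v=cv$, $c\neq0$, and $L_jv=I_jv=0$ for $j>0$. Then $I_2\cdot v(k)=c\,v(k+2)$ and $I_3$ acts by $0$. Hence $[L_1,I_2]\cdot v(k)=2c\,v(k+3)\neq0=2I_3\cdot v(k)$, although $[L_1,I_2]=2I_3$ in $\T$. So (1) holds exactly when $\bmd$ is constant or $I_0V=0$, and your argument proves it in that case. You should have drawn this conclusion instead of leaving it as something to check. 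The paper's final equality identifies $(pn+i)\sum_q\frac{(m+pn+i)^{q+d_i}}{(q+d_i)!}I_{q+d_i}v(m+pn+i+k)$ with $(pn+i)I_{m+pn+i}\cdot v(k)$, which tacitly assumes $d_{\ov{m+i}}=d_i$.

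In part (2), your claim that the two orderings of $I_{pm+i}I_{pn+j}$ on $v(k)$ ``vanish or survive together'' is false. They carry the factors $\dt_{\ov{j+k},P}\dt_{\ov{i+j+k},P}$ and $\dt_{\ov{i+k},P}\dt_{\ov{i+j+k},P}$ respectively. With $p=3$, $P=\{0,1\}$, $\bmd=(0,0,0)$ and the same $V$, one gets $I_2I_1\cdot v(0)=c^2v(3)$ but $I_1I_2\cdot v(0)=0$, whereas $[I_2,I_1]=0$ in $\g$. So (2) needs an extra hypothesis, for instance $\ov{r-s+u}\in P$ for all $r,s,u\in P$. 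Equivalently, $P$ is empty or a coset of a subgroup of $\Z/p\Z$, which is automatic for $p=2$. The only other escape is that the offending products of $I$'s vanish on $V$. The paper's ``similar to (1)'' passes over this as well.

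Finally, your last sentence has the inclusion backwards: $\tdp\subseteq\td$ does not place the operators $I_{j+d_i}$ in $\tdp$. What is true is that $I_{j+d_i}\in\td$ since $d_i\ge\mind$, and this suffices here because $V$ is a $\td$-module.
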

\begin{proof}
(1) For any $k,m,n\in\Z, 0\le i,j\le p-1$ and $v\in V$, it is clear that
\[\(I_{pm+i}I_{pn+j}-I_{pn+j}I_{pm+i}\)\cdot v(k)=0,\]
and we have
\begin{align*}
   &\(L_mI_{pn+i}-I_{pn+i}L_m\)\cdot v(k)\\
 =&\(a+pn+i+k+\sum_{q=0}^\infty\frac{m^{q+1}}{(q+1)!}L_q\)
         \(\sum_{l=0}^\infty\frac{(pn+i)^{l+d_i}}{(l+d_i)!}\eleid li\)v(m+pn+i+k)\\
  &-\(\sum_{l=0}^\infty\frac{(pn+i)^{l+d_i}}{(l+d_i)!}\eleid li\)
       \(a+k+\sum_{q=0}^\infty\frac{m^{q+1}}{(q+1)!}L_q\)v(m+pn+i+k)\\
 =&(pn+i)\sum_{l=0}^\infty\frac{(pn+i)^{l+d_i}}{(l+d_i)!}\eleid li v(m+pn+i+k)\\
  &+\sum_{l=0}^\infty\sum_{q=0}^\infty\frac{(pn+i)^{l+d_i}m^{q+1}}{(l+d_i)!(q+1)!}
     [L_q,\eleid li]v(m+pn+i+k)\\
 =&(pn+i)\sum_{q=0}^\infty\sum_{l=\dt_{d_i,0}}^{q+1}
      \frac{(pn+i)^{l+d_i-1}m^{q-l+1}}{(l+d_i-1)!(q-l+1)!}\eleid qi v(m+pn+i+k)\\
 =&(pn+i)\sum_{q=0}^\infty\frac{(m+pn+i)^{q+d_i}}{(q+d_i)!}\eleid qi v(m+pn+i+k)
  =(pn+i)I_{m+pn+i}\cdot v(k).
\end{align*}
Furthermore, for any $k,m,n\in\Z, v\in V$, we have
\begin{align*}
   &(L_mL_n-L_nL_m)\cdot v(k)\\
 =&\(a+n+k+\sum_{i=0}^\infty\frac{m^{i+1}}{(i+1)!}L_i\)
      \(a+k+\sum_{j=0}^\infty\frac{n^{j+1}}{(j+1)!}L_j\)v(m+n+k)\\
  &-\(a+m+k+\sum_{j=0}^\infty\frac{n^{j+1}}{(j+1)!}L_j\)
    \(a+k+\sum_{i=0}^\infty\frac{m^{i+1}}{(i+1)!}L_i\)v(m+n+k)\\
 =&(n-m)(a+k)v(m+n+k)
   +\(n\sum_{j=0}^\infty\frac{n^{j+1}}{(j+1)!}-m\sum\frac{m^{j+1}}{(j+1)!}\)L_jv(m+n+k)\\
  &+\sum_{i=0}^\infty\sum_{j=0}^\infty\frac{m^{i+1}n^{j+1}}{(i+1)!(j+1)!}
      (j-i)L_{i+j}v(m+n+k)\\
 =&(n-m)(a+k)v(m+n+k)\\
   &+\(n\sum_{j=0}^\infty\sum_{i=0}^{j+1}\frac{m^{i+1}n^{j-i}}{(i+1)!(j-i)!}
   -m\sum_{j=0}^\infty\sum_{i=0}^{j+1}\frac{m^{i}n^{j-i+1}}{i!(j-i+1)!}\)L_jv(m+n+k)\\
 =&(n-m)\(a+k+\sum_{j=0}^\infty\frac{(m+n)^{j+1}}{(j+1)!}L_j\)v(m+n+k)
   =(n-m)L_{m+n}\cdot v(k).
\end{align*}
This shows that $V\otimes\C[x^{\pm1}]$ is a $\T$-module.
Since $L_0\cdot v(k)=(a+k)v(k)$ for any $v\in V, k\in\Z$ and
\[V\otimes\C[x^{\pm1}]=\bigoplus_{k\in\Z}V\otimes x^k,\]
we see that $V\otimes\C[x^{\pm1}]$ is a weight $\T$-module.\\
(2) The proof is similar to that of (1) and we omit the details.
\end{proof}

We denote by $\mtvad$ the $\T$-module defined in \eqref{T-action},
and by $\mgvadp$ the $\g$-module defined in \eqref{g-action} respectively.
In the following, we determine their irreducibility.
To do this, we need the following result about the operator $\opwijlm s$ defined in \eqref{defopw}.
\begin{lemma}\label{lem:operation}
Let $l,m\in\Z,1\le i,j\le p-1$ and $V$ be a $\td$-module of annihilating level $(h,q)$ and $v\in V$. Then we have the following equations
\begin{align*}
 &\opwijlm{2h}\cdot v(k)=(-1)^hp^{2h}\binom{2h}h I_h^2v(pl+k+i+j),\\
 &\opwijlm{s}\cdot v(k)=0\quad\text{ for }s>2h
\end{align*}
on the $\T$-module $\mtvad$ for any $k\in\Z$,
and on the $\g$-module $\mgvadp$ provided further that $k,j+k, i+j+k\in\P$.
\end{lemma}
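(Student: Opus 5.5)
The plan is to compute the action of each summand of $\opwijlm s$ on $v(k)$ directly from \eqref{T-action}, and to recognize the alternating binomial sum in \eqref{defopw} as an $s$-th finite difference of a polynomial. To avoid a clash with the module index $k$, I rename the summation index in \eqref{defopw} as $t$. Put $X_t=pm+pt+j$ and $A=pl+i+j$. Then the first factor of the $t$-th summand is $I_{A-X_t}$, with $A-X_t=pl-pm-pt+i$, and the second is $I_{X_t}$.

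First I apply $I_{X_t}$ to $v(k)$, and then $I_{A-X_t}$ to the resulting vector $v(X_t+k)$. By \eqref{T-action}, and since $V$ has annihilating level $(h,q)$ (so $I_r$ acts as $0$ on $V$ for $r>h$), we get
\[
I_{A-X_t}I_{X_t}\cdot v(k)=\sum_{\alpha=d_i}^{h}\sum_{\beta=d_j}^{h}\frac{(A-X_t)^{\alpha}X_t^{\beta}}{\alpha!\,\beta!}\,I_\alpha I_\beta\, v(A+k).
\]
Here I use that $[I_\alpha,I_\beta]$ acts trivially, because the central elements act by zero. The indices satisfy $\alpha,\beta\ge d\ge\mind$, so all $I_\alpha,I_\beta$ lie in $\td$. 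The target vector $v(A+k)=v(pl+i+j+k)$ does not depend on $t$.

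The key observation is that, as a function of $t$, the operator coefficient
\[
f(t)=\sum_{\alpha,\beta}\frac{(A-X_t)^{\alpha}X_t^{\beta}}{\alpha!\beta!}I_\alpha I_\beta
\]
is a polynomial in $t$ with coefficients in $\mathrm{End}(V)$. Its degree is at most $2h$, since $X_t=pt+(pm+j)$ is linear in $t$. Therefore
\[
\opwijlm s\cdot v(k)=\Big(\sum_{t=0}^s(-1)^{s-t}\binom st f(t)\Big)v(A+k)=(\Delta^s f)(0)\,v(A+k),
\]
where $\Delta$ is the forward difference operator. By the standard fact that $\Delta^s$ kills polynomials of degree less than $s$ and sends a degree-$s$ polynomial to $s!$ times its leading coefficient, we get $0$ for $s>2h$. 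For $s=2h$ we get $(2h)!$ times the $t^{2h}$-coefficient of $f$. Only the term $\alpha=\beta=h$ contributes to that coefficient, and it contributes $\frac{(-p)^h p^h}{h!\,h!}I_h^2$. Multiplying by $(2h)!$ yields $(-1)^hp^{2h}\binom{2h}hI_h^2$, as claimed.

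For the $\g$-module $\mgvadp$ the computation is identical, apart from two Kronecker factors coming from \eqref{g-action}. The factor $\dt_{\ov{j+k},P}$ appears when $I_{X_t}$ acts on $v(k)$, and $\dt_{\ov{i+j+k},P}$ appears when $I_{A-X_t}$ acts on $v(X_t+k)$. Both are independent of $t$ and equal $1$ under the hypotheses $j+k,\,i+j+k\in\P$, while $k\in\P$ ensures that $v(k)$ lies in $V\otimes\C[P]$. After this, the same finite-difference argument applies verbatim.

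I expect the only delicate point to be bookkeeping rather than a real obstacle. One must check that the $t$-dependence enters only through the scalar polynomial coefficients, which holds because the target vector and the operators $I_\alpha I_\beta$ are independent of $t$. One must also confirm that the top-degree term is exactly $\alpha=\beta=h$, which relies on $h\ge d_i,d_j$; this follows from $h\ge\mind$ and the annihilating level hypothesis.
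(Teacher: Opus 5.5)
Your proof is correct and is essentially the paper's argument. The paper views $\opwijlm{s}\cdot v(k)$ as a polynomial in $m$ and inducts on $s$ via \eqref{recformula}, which amounts to iterating the forward difference in $m$. You instead write the alternating binomial sum in closed form as $(\Delta^s f)(0)$ in the summation index. This is the same computation, since each summand depends on $m$ and $t$ only through $m+t$.

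One correction to your last paragraph. The inequality $h\ge\max\{d_i,d_j\}$ does not follow from $h\ge\mind$ when $h=0$; take for example $p=2$, $\bmd=(0,1)$, $i=j=1$. In that case the lemma itself is false. Indeed, $I_{pn+i}$ or $I_{pn+j}$ then acts by zero for every $n$, so the left side vanishes. The right side $I_0^2v(pl+k+i+j)$ is nonzero for irreducible $V$, where $I_0$ acts by a nonzero scalar.

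So the statement needs the tacit hypothesis $h\ge\max\{d_i,d_j\}$, which is automatic when $h\ge1$. The paper's proof also uses this hypothesis without comment. Under it, your argument is complete.
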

\begin{proof}
 Consider $\opwijlm{s}\cdot v(k)$ as a polynomial in $m$ with coefficients in $V\otimes x^{pl+k+i+j}$.
 Using induction on $s\geq 0$ and \eqref{recformula},
 one can prove that the leading term of $\opwijlm{s}\cdot v(k)$ is
 \[\frac{(-1)^hp^{2h}m^{2h-s}(2h)!}{(h!)^2(2h-s)!}I_h^2v(pl+k+i+j).\]
 Then the lemma follows by taking $s=2h$.
\end{proof}

Now we can prove the main results in this section.
\begin{theorem}\label{thm:irrgmod}
 Let $a\in\C, h,q\in\N,\bmd=(d_0,d_1,\cdots,d_{p-1})\in\{0,1\}^p$ and $P\subseteq\{0,1,\dots,p-1\}$ be such that $h\ge\mindp$.
 Let $V$ be a $\tdp$-module with annihilating level $(h,q)$.
 Then the $\g$-module $\mgvadp$ is irreducible if and only if the $\tdp$-module $V$ is irreducible.
\end{theorem}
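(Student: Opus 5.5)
The plan is to prove the two directions separately. Throughout I use that $\mgvadp=\bigoplus_{k\in\P}V\otimes x^k$ is a weight module in which $V\otimes x^k$ is the $L_0$-eigenspace of eigenvalue $a+k$, and that these eigenvalues are pairwise distinct.

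\emph{Necessity.} Suppose $V$ is reducible, and let $0\ne W\subsetneq V$ be a proper $\tdp$-submodule. I would show that $W\otimes\C[P]$ is a proper nonzero $\g$-submodule. Every operator on $V$ that appears in \eqref{g-action} is either $L_j$ with $j\ge0$, or a scalar, or $I_{j+d_i}$ for an index $i$ with $\dt_{\ov{i+k},P}\neq0$. In the last case the relevant indices are controlled by $\mindp$, so these operators lie in $\tdp$. Hence $W\otimes\C[P]$ is stable under $\g$. This step is only bookkeeping on the indices $d_i$, $i\in P$, in the definition of $\tdp$.

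\emph{Sufficiency.} Let $V$ be irreducible and let $N\ne0$ be a submodule of $\mgvadp$. Since $N$ is a weight submodule, $N=\bigoplus_{k\in\P}N_k\otimes x^k$, where $N_k=\{u\in V\mid u(k)\in N\}$. I would prove three things:
\begin{enumerate}
\item[(a)] each $N_k$ is a $\tdp$-submodule of $V$;
\item[(b)] $N_k\neq0$ for some $k$ implies $N_{k'}\neq0$ for every $k'\in\P$;
\item[(c)] consequently, by irreducibility of $V$, $N_k=V$ for all $k\in\P$, so $N=\mgvadp$.
\end{enumerate}

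For (b) I would use Lemma~\ref{lem:operation}. If $v(k)\in N$ with $v\ne0$, then
\[\opwijlm{2h}\cdot v(k)=(-1)^hp^{2h}\tbinom{2h}{h}I_h^2v(pl+k+i+j)\in N.\]
By Remark~\ref{rmk:Ih}(1), $I_h$ acts on the irreducible $V$ as a linear isomorphism, so $I_h^2v\ne0$. Taking $i,j$ with $k,\ j+k,\ i+j+k\in\P$ and letting $l$ vary, this moves a nonzero vector into every weight space of the residue class $\ov{k+i+j}$. To move within the residue class of $k$ itself, I would use the family $L_{-pm}L_{pm}v(k)$, or simply $L_{pm}v(k)$ together with (a). Chaining admissible residues through $P$ then reaches every class in $P$. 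If $|P|\le1$, only the $L$-moves are needed.

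For (a) I would view $L_{p(l-m)}L_{pm}\cdot v(k)\in N_{k+pl}\otimes x^{k+pl}$, and similarly $I_{p(l-m)+i}I_{pm+j}\cdot v(k)$, as polynomials in $m$ with coefficients in $V$. A Vandermonde argument over infinitely many $m$ shows that every coefficient lies in $N_{k+pl}$. With $l=0$, the top coefficients are nonzero multiples of $L_q^2v$ and $I_h^2v$. Descending in the degree of $m$, I would then extract, by a triangular induction, the single actions $L_jv$ and $I_{j+d_i}v$, up to terms already known to lie in $N_{k}$.

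Combining (a) and (b) gives $N_{k'}\ne0$ and $\tdp$-stable for every $k'\in\P$, hence $N_{k'}=V$, which is (c).

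I expect the main obstacle to be the triangular extraction in (a). The quadratic expressions mix products such as $L_iL_jv$, and isolating the linear terms requires ordering the coefficients by degree in $m$ and using the annihilating level $(h,q)$ to cut off the sums. If this proves awkward, I would instead first establish (b) so that all $N_{k'}$ are nonzero. I would then compare $L_{pm}v(k)\in N_{k+pm}$ with elements already known to lie in $N_{k+pm}$, extracting each $L_jv$ from the expansion of $L_{pm}v(k)$ as a polynomial in $m$ via a uniform-in-$m$ Vandermonde argument.
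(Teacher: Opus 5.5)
The gap is step (a), and it cannot be closed, because (a) is false in general. Only weight-zero elements of $\U(\g)$ map $V\otimes x^k$ to itself. These include $L_{-pm}L_{pm}$, or $\Omega^{(i,j,s)}_{l,m}$ with $pl+i+j=0$, and they act on $V$ through products of the $L_j$ and $I_{j+d_i}$. Vandermonde gives you coefficients such as $L_q^2v$, $I_h^2v$ or $(L_0-L_0^2+(a+k)L_1)v$ in $N_k$. No triangular induction isolates a single $L_jv$ or $I_{j+d_i}v$ from these: knowing $L_q^2v\in N_k$ says nothing about $L_qv$.

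For $p=2$ the obstruction is structural. Every weight-zero monomial contains an even number of factors $I_{2m+1}$. So if $V=V_0\oplus V_1$ with all $L_j$ even and all $I_j$ odd, then $N=\bigoplus_{k\in2\Z}V_0\otimes x^k\oplus\bigoplus_{k\in2\Z+1}V_1\otimes x^k$ is a proper $\g$-submodule whose components are not $I_h$-stable.

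Such $V$ exist. On $V=\C[y^{\pm1}]\epsilon$ (think $\epsilon=e^{1/y^2}$), let $L_0(g\epsilon)=(yg'-2y^{-2}g)\epsilon$ and $I_1(g\epsilon)=yg\epsilon$, and let all other basis elements of $\T_1$ act by $0$. This is a $\T_1$-module of annihilating level $(1,0)$, graded by the parity of $y$-degrees. It is irreducible: the coefficient space of a nonzero submodule is stable under $y$ and under $y^3\frac{d}{dy}=y^2L_0+2$. Hence its intersection with $\C[y]$ is an ideal stable under $y^3\frac{d}{dy}$, so it equals $(y^c)$. Then $L_0(y^c\epsilon)=(cy^c-2y^{c-2})\epsilon$ yields all $y^{c-2r}\epsilon$, hence everything. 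Taking $p=2$, $P=\{0,1\}$, $\bmd=(1,1)$ (or any $|P|=2$ with all $d_i=1$) gives a reducible $\mgvadp$, so the statement itself fails there.

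For $|P|=1$ no $I_{pm+i}$ acts at all, so (a) would have to produce $I$-stability from the $L_j$ alone. For example, $V=\C[t]$ with $L_0=t$ and $I_1f(t)=f(t-1)$ is an irreducible $\T_1$-module, yet $tV\otimes\C[P]$ is a proper submodule. So $|P|\le1$ is a counterexample case, not an easy case.

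Your fallback has the same defect. $L_{pm}\cdot v(k)$ lands in $N_{k+pm}$, a different subspace for each $m$, so there is no fixed space in which to run Vandermonde. Also, (b) only gives $N_{k'}\neq0$, not a common vector.

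The idea you are missing is the paper's: work with $\tilde N=\bigcap_{k\in\P}N_k$, and fix the target while varying the source. For $v\in\tilde N$ one has $L_{pm}\cdot v(k-pm)=\bigl(a+k-pm+\sum_j\frac{(pm)^{j+1}}{(j+1)!}L_j\bigr)v(k)\in N_k\otimes x^k$ for every $m$. Since $v\in N_k$, Vandermonde in the single space $N_k$ gives $L_jv\in N_k$. Likewise $I_{pm+i}\cdot v(k-pm-i)$ gives $I_{j+d_i}v\in N_k$, while $I_0$, central in $\T_0$, acts on the irreducible $V$ by a scalar. So $\tilde N$ is a $\tdp$-submodule, and everything reduces to showing $\tilde N\neq0$.

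That comes from Lemma~\ref{lem:operation}: $\Omega^{(i,j,2h)}_{l,m}$ sends $v(k)$ to a nonzero multiple of $I_h^2v(pl+k+i+j)$ for every $l$. This places the \emph{same} vector $I_h^2v$ in all weight spaces of residue $\ov{k+i+j}$. However, the route $\ov k\to\ov{k+j}\to\ov{k+i+j}$ with $i,j\not\equiv0\pmod p$ needs an intermediate residue in $P$ distinct from both ends. So it reaches every residue of $P$ only when $|P|\ge3$. The counterexamples above show this hypothesis is necessary, and the paper's proof uses it without stating it.
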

\begin{proof}
 The ``only if" part is clear and to prove the ``if" part we assume $W$ is a nonzero $\g$-submodule of $\mgvadp$. Since $W$ is a weight module, we may write
 \[W=\bigoplus_{k\in\P} \tilde{W}_k\otimes x^k,\]
 where $\tilde W_k$'s are subspaces of $V$. Set
 \[\tilde W=\bigcap_{k\in\P} \tilde{W}_k.\]
 We claim that $\tilde W\neq 0$.
 Pick a nonzero vector $v(k)\in W$ for some $k\in\P, v\in\tilde W_k$.
 For any $l,m\in\Z, 1\le i,j\le p-1$ such that $\ov{j+k},\ov{i+j+k}\in P$,
 we have by Lemma \ref{lem:operation} and Remark \ref{rmk:Ih}
 \[\opwijlm{2h}\cdot v(k)=(-1)^hp^{2h}\binom{2h}h I_h^2v(pl+k+i+j)\in
  \tilde W_{pl+k+i+j}\otimes x^{pl+k+i+j}\setminus\{0\},\]
 which implies $I_h^2v\in\tilde W$, proving the claim.

 For any $v\in \tilde W, l,m\in\Z, k\in\P, i\in P\setminus\{0\}$ such that $\ov{k-i}\in P$, we have
 \[ v(k-pm)\in \tilde W_{k-pm}\otimes x^{k-pm}\quad\text{ and }\quad
   v(k-pm-i)\in \tilde W_{k-pm-i}\otimes x^{k-pm-i}.\]
 From the following equations
 \begin{align*}
  &L_{pm}\cdot v(k-pm)=\(a+k-pm+\sum_{j=0}^q\frac{(pm)^{j+1}}{(j+1)!}L_j\)v(k)
  \in \tilde W_k\otimes x^k,\\
  &I_{pm+i}\cdot v(k-pm-i)=\sum_{j=0}^{h-d_i}\frac{(pm+i)^{j+d_i}}{(j+d_i)!}\eleid ji
   v(k)\in \tilde W_k\otimes x^k,
 \end{align*}
 we see that
 \[\sum_{j=0}^q\frac{(pm)^{j+1}}{(j+1)!}L_jv\in\tilde W_k\quad\text{ and }\quad
    \sum_{j=0}^{h-d_i}\frac{(pm+i)^{j+d_i}}{(j+d_i)!}\eleid ji v\in\tilde W_k
   \quad\text{ for any }m\in\Z,1\le i\le p-1. \]
 Thus $L_jv, I_{j+\min\{\bmd,P\setminus\{0\}\}}v\in\tilde W_k$ for all $j\in\N, k\in\P$.
 So $L_jv, I_{j+\min\{\bmd,P\setminus\{0\}\}}v\in\tilde W$ for all $j\in\N$.
 This proves that $\tilde W$ is a nonzero $\T_{\min\{\bmd,P\setminus\{0\}\}}$-submodule of $V$.

 If $\min\{\bmd,P\setminus\{0\}\}>\mindp$, such case happens only when
 $0\in P, d_0=0$ and $d_i=1$ for all $i\in P\setminus\{0\}$,
 or equivalently, $I_0\in\tdp\setminus\T_{\min\{\bmd,P\setminus\{0\}\}}$.
 Notice that $I_0$ acts on $V$ as a fixed scalar since
 $I_0$ is central in $\tdp$ and $V$ is irreducible. So $I_0v\in\tilde W$ and $\tilde W$ is a nonzero $\tdp$-submodule of $V$.
 So $\tilde W=V$ by the irreducibility of $V$, and hence $\mgvadp$ is irreducible.
\end{proof}

\begin{theorem}\label{thm:irrtmod}
 Let $a\in\C, h,q\in\N,\bmd=(d_0,d_1,\cdots,d_{p-1})\in\{0,1\}^p$ and
 let $V$ be a $\td$-module with annihilating level $(h,q)$.
 Then the $\T$-module $\mtvad$ is irreducible if and only if the $\td$-module $V$ is irreducible.
\end{theorem}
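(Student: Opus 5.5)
The plan is to mirror the proof of Theorem \ref{thm:irrgmod}; the $\T$-case is simpler because there is no parity restriction $\P$ on the indices. The ``only if'' direction is immediate. If $V'$ is a proper nonzero $\td$-submodule of $V$, then the formulas \eqref{T-action} involve only elements $L_j$ and $\eleid ji$ of $\td$, since $j+d_i\ge\mind$. Hence $V'\otimes\C[x^{\pm1}]$ is a proper nonzero $\T$-submodule of $\mtvad$.

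For the ``if'' direction, assume $V$ is irreducible and let $W$ be a nonzero submodule of $\mtvad$. Because $L_0$ acts on $v(k)$ by $a+k$ and these eigenvalues are distinct, we can write $W=\bigoplus_{k\in\Z}\tilde W_k\otimes x^k$ and set $\tilde W=\bigcap_k\tilde W_k$.

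The first step is to show $\tilde W\neq0$. Pick $0\ne v\in\tilde W_k$. By Lemma \ref{lem:operation}, for every $l\in\Z$ and fixed $1\le i,j\le p-1$,
\[\opwijlm{2h}\cdot v(k)=(-1)^hp^{2h}\binom{2h}hI_h^2v(pl+k+i+j),\]
and $I_h^2v\neq0$ by Remark \ref{rmk:Ih}. So $I_h^2v\in\tilde W_{k'}$ for all $k'\equiv k+i+j\pmod p$. To reach every residue class, vary $i+j$ over $\{2,\dots,2p-2\}$. When $p\ge3$ this already covers all residues modulo $p$. When $p=2$ it gives only $i+j=2$, so I would additionally apply one more operator; the cleanest choice is $L_{-r}$ combined with the argument of the next step, since $L_m$ shifts $k$ by an arbitrary integer. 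Concretely, the argument below applied with the fixed target index $k'$ shows that $\sum_j\frac{m^{j+1}}{(j+1)!}L_jw\in\tilde W_{k'}$ for all $m$. It therefore suffices to show that $W$ contains $w(k')$ for one $w\neq0$ and every $k'$.

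I expect this residue-class bookkeeping to be the main obstacle. To settle it directly, I would take $u=I_h^2v$, which lies in $\tilde W_{k''}$ for a full residue class of $k''$. Then $L_m\cdot u(k''-m)$ is a polynomial in $m$ of degree at most $q+1$, with coefficients $a+k''-m$ and the $L_ju$. Since $k''-m$ runs over a full residue class minus $m$, any target index can be reached with infinitely many $m$. A Vandermonde argument on the polynomial in $m$ then extracts $u$ itself, using the constant coefficient in $m$, together with all $L_ju$, into every $\tilde W_{k'}$. This gives $0\ne u\in\tilde W$.

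The second step is to show that $\tilde W$ is a $\td$-submodule. For $v\in\tilde W$, $k\in\Z$ and all $m\in\Z$, we have $v(k-m)\in W$, so
\[L_m\cdot v(k-m)=\Big(a+k-m+\sum_{j=0}^q\tfrac{m^{j+1}}{(j+1)!}L_j\Big)v(k)\in\tilde W_k\otimes x^k.\]
Viewed as a polynomial in $m$, Vandermonde gives $L_jv\in\tilde W_k$ for all $j$. Similarly, $I_{pm+i}\cdot v(k-pm-i)$ is a polynomial in $pm+i$ whose coefficients are $\eleid ji v$. Using all $m$ and each $i\in\{0,\dots,p-1\}$ yields $I_{j+d_i}v\in\tilde W_k$. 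Since $\min_i d_i=\mind$, all $I_n v$ with $n\ge\mind$ are obtained; unlike in Theorem \ref{thm:irrgmod}, no $I_0$ scalar argument is needed, because $i=0$ is allowed here. Thus $\tilde W$ is a nonzero $\td$-submodule of $V$, so $\tilde W=V$ by irreducibility, and hence $W=\mtvad$.
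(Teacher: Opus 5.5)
Your route differs from the paper's. The paper never argues on $\T$ directly. It regards $\mtvad$ as the $\g$-module of Theorem~\ref{thm:irrgmod} with $P=\{0,1,\dots,p-1\}$ and quotes that theorem. You instead rerun the weight-space argument on $\T$ itself. This gives you the operators $I_{pm}$ (the case $i=0$) and removes the need for the scalar action of $I_0$. For $p\ge 3$ your proof is correct and is the paper's argument specialized to $\T$.

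Your worry about $p=2$ is justified, and it is a real defect of the paper's route, not just bookkeeping. When $p=2$ the only operator in the proof of Theorem~\ref{thm:irrgmod} is $\Omega^{(1,1,2h)}_{l,m}$, which never leaves a parity class, and the restriction to $\g$ can in fact be reducible. Take $V=\C[t^{\pm1}]$ with $I_1=t$, $L_0=t\frac{d}{dt}+t^{-2}$, and all other $L_j,I_j$ acting by zero. This is an irreducible $\T_1$-module of annihilating level $(1,0)$: a nonzero $\C[t]$-submodule is $t^ef\C[t]$ or $f\C[t^{\pm1}]$ with $f(0)\neq0$, the $t^{-2}$ term rules out the first kind, and $L_0$-stability of the second forces $f$ to be constant. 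Yet for $p=2$ and $\bmd=(1,1)$, the span of the $t^r\otimes x^s$ with $r\equiv s\pmod 2$ is a proper $\g$-submodule of $\mtvad$. It fails to be a $\T$-submodule only because of the $I_{2m}$. So for $p=2$ a direct argument using the even-indexed $I$'s, as you propose, is genuinely needed.

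The gap is in your $p=2$ patch. For $K$ in the parity class opposite to $k$, the vectors $L_m\cdot u(K-m)$ with $m$ odd give
\[(a+K)u+m(L_0u-u)+\sum_{j\ge1}\frac{m^{j+1}}{(j+1)!}L_ju\in\tilde W_K .\]
So Vandermonde yields $(a+K)u$, $L_0u-u$ and the $L_ju$ with $j\ge1$, but not $u$ itself when $a+K=0$. Hence if $a\in\Z$ and $-a\not\equiv k\pmod 2$, you do not get $u\in\tilde W_{-a}$, and $u\in\tilde W$ is not established. Your second step, however, uses $v(k-m)\in W$ for all $m$.

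The repair is short. You do have $u\in\tilde W_K$ for every $K\neq -a$. Apply $\Omega^{(1,1,2h)}_{l,m}$ both to $u(k)$ and to $u(K_1)$ for some $K_1\not\equiv k\pmod 2$ with $K_1\neq -a$. By Lemma~\ref{lem:operation} this puts $I_h^2u=I_h^4v\neq0$ into $\tilde W_K$ for every $K\in\Z$.

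Alternatively, on the $\T$-module use the operator $\sum_{r=0}^{2h}(-1)^r\binom{2h}{r}I_{2l-2m-2r+1}I_{2m+2r}$. It has the same leading-term computation as in Lemma~\ref{lem:operation} and sends $v(k)$ to a nonzero multiple of $I_h^2v(2l+k+1)$. This reaches the other parity class directly and avoids $L_m$ altogether.
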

\begin{proof}
 The ``only if" is clear. To prove the ``if" part we
 consider $\mtvad$ as the $\g$-module $\mgvadp$ with $P=\{0,1,\dots,p-1\}$,
 which is irreducible by Theorem \ref{thm:irrgmod}.
 So $\mtvad$ is irreducible as a $\T$-module.
\end{proof}

\section{Isomorphism classes}

In this section we determine the isomorphism classes of the modules constructed in the above section for the twisted Heisenberg-Virasoro algebra and gap-$p$ Virasoro algebras.
We first do this for the gap-$p$ Virasoro algebras.

\begin{theorem}\label{thm:isoforgap}
 Let $\bmd,\bme\in\{0,1\}^p, a,b\in\C,h,q,n,t\in\N$ such that $h\geq \mind,n\geq \mine$,
 and $P,Q$ be subsets of $\{0,1,\dots,p-1\}$.
 Let $V$ be an irreducible $\td$-module of annihilating level $(h,q)$ and $W$ be an irreducible $\te$-module of annihilating level $(n,t)$.
 Then $\mgvadp\cong\mgwbeq$ as $\g$-modules if and only if $a-b\in\Z, h=n,q=t,Q=\sigma^{a-b}(P), d_i=e_i$ for all $i\in P-P$ and $V\cong W$ as $\T_{\min\{\bmd,P-P\}}$-modules,
 where $P-P=\left\{\ov{i-j}\mid i,j\in P\right\}$ and $\sigma$ is the permutation $(0,1,\dots,p-1)$ acting on the set $\{0,1,\dots,p-1\}$.
\end{theorem}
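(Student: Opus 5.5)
We plan to prove both directions by studying how a $\g$-module isomorphism $\phi\colon\mgvadp\to\mgwbeq$ acts on weight spaces. We then compare the polynomial dependence on $m$ of the actions \eqref{g-action}, using the operators $\opwijlm s$ and Lemma \ref{lem:operation}.

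\emph{Necessity.} The $L_0$-weights of $\mgvadp$ are $a+k$ with $k\in\P$, each weight space being $V\otimes x^k$. Those of $\mgwbeq$ are $b+k'$ with $\ov{k'}\in Q$. An isomorphism preserves weights, so $a-b\in\Z$ and $\ov{k+a-b}\in Q$ exactly when $\ov k\in P$, i.e.\ $Q=\sigma^{a-b}(P)$. Moreover $\phi$ restricts to linear bijections $\phi_k\colon V\to W$ with
\[\phi(v(k))=\phi_k(v)(k+a-b).\]
Next apply $\opwijlm s$ for $l,m\in\Z$ and admissible $i,j$ (those with $k,j+k,i+j+k\in\P$). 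Since $\phi$ commutes with $\opwijlm s$, Lemma \ref{lem:operation} shows that the largest $s$ with $\opwijlm s$ acting nontrivially is $2h$ on the source and $2n$ on the target; here we use that $I_h$, $I_n$ act bijectively (Remark \ref{rmk:Ih}). Hence $h=n$, and taking $s=2h$ gives
\[\phi_{pl+k+i+j}\,I_h^2=I_h^2\,\phi_k .\]
\textbf{Step 1 (main obstacle).} Prove that all the $\phi_k$ coincide, up to this twisting, on the residue classes linked through $P$. The differences $i+j$ realizable by such chains have residues in $P-P$. I would first try to combine the displayed relation with the relation $L_{pm}\phi=\phi L_{pm}$, read as a polynomial identity in $m$. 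Concretely, write $\phi_{k+pm}$ in terms of $\phi_k$ via the $\Omega$-relation and the invertibility of $I_h^2$, then compare coefficients of powers of $m$. This should give $\phi_{k+pm}=\phi_k$ on each class, and also identify $\phi_k$ across classes differing by elements of $P-P$. This is the delicate bookkeeping step.

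\textbf{Step 2.} Once a single map $\psi=\phi_k$ is available, expand
\[\phi\bigl(L_{pm}\cdot v(k)\bigr)=L_{pm}\cdot\phi\bigl(v(k)\bigr)\]
as polynomials in $m$. Comparing the top degree gives $q=t$, and comparing all coefficients gives $\psi L_j=L_j\psi$ for every $j$. Similarly, for $i\in P-P$ expand the action of $I_{pm+i}$. The lowest power of $(pm+i)$ appearing is $d_i$ on one side and $e_i$ on the other, so $d_i=e_i$. Comparing the remaining coefficients then gives $\psi I_{j+d_i}=I_{j+d_i}\psi$. Altogether $\psi$ is an isomorphism of $\T_{\min\{\bmd,P-P\}}$-modules; if some $I_0$ is needed, we use that it acts by a scalar, as in the proof of Theorem \ref{thm:irrgmod}.

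\emph{Sufficiency.} Given an isomorphism $\psi\colon V\to W$ of $\T_{\min\{\bmd,P-P\}}$-modules together with the stated numerical conditions, define
\[\phi\bigl(v(k)\bigr)=\psi(v)(k+a-b).\]
One checks directly from \eqref{g-action} that $\phi$ is a $\g$-module map. The $L_{pm}$-action involves only $L_j$, since $a+k=b+(k+a-b)$. The $I_{pm+i}$-action is nonzero only when $\ov{i+k}\in P$, which forces $\ov i\in P-P$; for such $i$ we have $d_i=e_i$, so the coefficients agree and $\psi$ intertwines the relevant $I_{j+d_i}$. Finally, $\phi$ is bijective because $\psi$ is and $k\mapsto k+a-b$ maps $\P$ onto the set of $k'$ with $\ov{k'}\in Q$.
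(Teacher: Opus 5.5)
Your skeleton is the paper's. Weights give $a-b\in\Z$, $Q=\sigma^{a-b}(P)$ and bijections $\phi_k$. The operators $\opwijlm{2h}$ give $h=n$ and $\phi_{pl+k+i+j}I_h^2=I_h^2\phi_k$. One then passes to a single $\psi$ and compares coefficients in $m$. Your sufficiency argument is fine.

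\textbf{The gap is Step 1, which you only announce, and its cross-class half is false.} The within-class half is immediate and needs no $L_{pm}$-relation: the right side of your displayed relation does not depend on $l$, so $\phi_{pl+c}$ is independent of $l$, provided $|P|\ge 2$ so that admissible $i,j$ exist. Across classes, however, you only have two relations: the conjugacy $\phi_{\ov{k+i+j}}=I_h^2\phi_kI_h^{-2}$, and, from the $I_{pm+i}$-action, $\phi_{\ov{k+i}}I_r=I_r\phi_k$ for $r\ge\max\{d_i,e_i\}$. These do not force equal maps. Counterexample:
\begin{itemize}
\item Take $p=2$, $P=Q=\{0,1\}$, $\bmd=\bme=(0,0)$, $a=b$.
\item Let $V=\C v$, $W=\C w$ with $L_0v=\beta v$, $L_0w=\beta w$, $I_0v=cv$, $I_0w=-cw$ ($\beta,c\neq 0$), and all other generators acting by $0$. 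Both are irreducible $\T_0$-modules of annihilating level $(0,0)$.
\item By \eqref{g-action}, $L_{2m}\cdot v(k)=(a+k+2m\beta)v(2m+k)$ and $I_{2m+1}\cdot v(k)=c\,v(2m+1+k)$, and the same holds with $-c$ in $\mgwbeq$.
\item Hence $v(k)\mapsto(-1)^kw(k)$ is a $\g$-isomorphism with $\phi_0(v)=w$ and $\phi_1(v)=-w$.
\item But $V\not\cong W$ as $\T_0=\T_{\min\{\bmd,P-P\}}$-modules.
\end{itemize}
So the ``only if'' direction fails as stated, and no bookkeeping can close Step 1. The paper's proof has the same hole. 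It infers $\psi_i=\psi_j$ from $\psi_{\ov{i+j+k}}=I_h^2\psi_{\ov k}I_h^{-2}$, which is only a conjugacy, and is vacuous in this example since $i=j=1$.

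\textbf{Step 2 also overreaches in two places.}
\begin{itemize}
\item The lowest-power comparison giving $d_i=e_i$ presumes $I_{d_i}V\neq 0$. If $d_i=0$ and $I_0V=0$, then the $\T_0$-module $V$ with $\bmd=(0,\dots,0)$ and the $\T_1$-module $V|_{\T_1}$ with $\bme=(1,\dots,1)$ give literally the same $\g$-module.
\item ``$I_0$ acts by a scalar'' does not make $\psi$ intertwine $I_0$ when $I_0$ never enters the $\g$-action, e.g.\ $d_0=0$ but $d_i=1$ for all $i\in(P-P)\setminus\{0\}$. The scalars on $V$ and $W$ may then differ. Likewise $d_0$ versus $e_0$ is invisible, because $I_{pm}\notin\g$.
\end{itemize}

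\textbf{The $h=n$ step needs $|P|\ge 2$.} For $|P|=1$ every $I_{pm+i}$ acts by $0$, so no admissible $i,j$ exist.

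What your method (and the paper's) legitimately yields, for $|P|\ge 2$, is $h=n$, $q=t$, and $L$-intertwining bijections $\phi_c\colon V\to W$ ($c\in P$) with $\phi_{\ov{k+i}}I_r=I_r\phi_k$ for $r\ge\max\{d_i,e_i\}$. It does not yield a single $\T_{\min\{\bmd,P-P\}}$-isomorphism.
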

\begin{proof}
 The ``if" part follows from the fact that the linear map $\varphi:\mgvadp\longrightarrow\mgwbeq$ defined by
 \[\vf(v(l))=\psi(v)(l+a-b)\quad\text{ for any }v\in V, l\in\Z,\]
 where $\psi:V\longrightarrow W$ is a $\T_{\min\{\bmd,P-P\}}$-module isomorphism,
 is a $\g$-module isomorphism.

 In the next we prove the ``only if" part.
 Let $\varphi:\mgvadp\longrightarrow\mgwbeq$ be a $\g$-module isomorphism.
 For any $v\in V,m\in\P$, write
 \[\vf(v(m))=\sum_{i\in\mathcal Q}w_{i,m}(i),\]
 where $\Q=\{k\in\Z\mid \ov k\in Q\}, w_{i,m}\in W$ and only finitely many $w_{i,m}$'s are nonzero.
 For any $k\in\N$, we have
 \[(a+m)^k\sum_{i\in\mathcal Q}w_{i,m}(i)=\vf\(L_0^k\cdot v(m)\)=L_0^k\cdot \vf\(v(m)\)=
   \sum_{i\in\mathcal Q}(b+i)^k w_{i,m}(i) .\]
 This implies $a-b\in\Z, Q=\sigma^{a-b}(P)$ and
 \[\vf(v(m))=w_{m+a-b,m}(m+a-b)\in W\otimes x^{m+a-b}\quad\text{for any }v\in V,m\in\Z\]
 For any $i\in P$, we may define linear maps $\psi_i:V\longrightarrow V$ by
 \[\vf(v(i))=\psi_i(v)(i+a-b)\quad\text{for any }v\in V.\]
 Clearly, all $\psi_i$'s are linear isomorphisms.

 For any $l,m\in\Z, 1\le i,j\le p-1, v\in V,k\in\P$ such that $j+k\in\P, i+j+k\in \P$, from the equation
 \begin{align*}
  &\opwijlm{2h}\cdot\(\psi_{\ov k}(v)(k+a-b)\)=\opwijlm{2h}\cdot\vf(v(k))
     =\vf\(\opwijlm{2h}\cdot v(k)\)\\
  &=(-1)^h p^{2h}\binom{2h}h \vf\(I_h^2v(pl+i+j+k)\)\in W\otimes x^{pl+i+j+k+a-b}\setminus\{0\}
 \end{align*}
 we see by Lemma \ref{lem:operation} that $h\le n$.
 Apply $\vf^{-1}$ to the above argument and we get $n\le q$. Hence $h=n$.
 Then the above equation becomes
 \[\vf\(I_h^2v(pl+i+j+k)\)=I_h^2\psi_{\ov k}(v)(pl+i+j+k+a-b)\]
 by Lemma \ref{lem:operation}, or (noticing that $I_h$ acts on $V$ as a linear isomorphism)
 \begin{equation*}
  \vf\(v(pl+i+j+k)\)=I_h^2\psi_{\ov k}\(I_h^{-2}v\)(pl+i+j+k+a-b)
 \end{equation*}
 for any $l\in\Z,v\in V, 1\le i,j\le p-1,k\in\P$ such that $j+k\in\P, i+j+k\in \P$.
 Take $l\in\Z$ such that $pl+i+j+k=\ov{i+j+k}$ and we get
 \[\psi_{\ov{i+j+k}}(v)=I_h^2\psi_{\ov k}\(I_h^{-2}v\),\]
 which implies that $\psi_i=\psi_j$ for all $i,j\in P$.
 Set $\psi=\psi_i$ for any $i\in P$.
 Then we have
 $$\vf(v(l))=\psi(x)(l+a-b)\quad\text{ for any }v\in V, l\in\P.$$
 From $\vf(L_{pm}\cdot v(j-pm))=L_{pm}\cdot\vf( v(j-pm))$ with $m\in\Z, v\in V, j\in P$,
 we have
 \[\psi\(\(a+j-pm+\sum_{k=0}^q\frac{(pm)^{k+1}}{(k+1)!}L_k\)v\)=
   \(a+j-pm+\sum_{k=0}^t\frac{(pm)^{k+1}}{(k+1)!}L_k\)\psi(v).\]
 This implies that $q=t$ and
 \[\sum_{k=0}^q\frac{(pm)^{k+1}}{(k+1)!}(\psi(L_kv)-L_k\psi(v))=0
 \quad\text{ for any }m\in\Z.\]
 So $\psi(L_kv)=L_k\psi(v)$ for any $v\in V,k\in\N$.

 From $\vf(I_{pm+i}\cdot v(j-pm))=I_{pm+i}\cdot \vf(v(j-pm))$ with $m\in\Z, 1\le i\le  p-1, j\in\ P$ and $\ov{i+j}\in P$, we see that
 \[\sum_{k=0}^{h-d_i}\frac{(pm+i)^{k+d_i}}{(k+d_i)!}\psi(\eleid ki v)=
   \sum_{k=0}^{h-e_i}\frac{(pm+i)^{k+e_i}}{(k+e_i)!}I_{k+e_i} \psi(v)\quad
   \text{ for any }m\in\Z.\]
 Consider the both sides of the above equation as polynomials in $m$ with coefficients in $W$, and we get that $d_i=e_i$ for all $i\in P-P$, and $\psi(\eleid kiv)=\eleid ki\psi(v)$ for any $k\geq 0$.
 Equivalently,
 \[\psi(I_kv)=I_k\psi(v)\quad\text{ for any }v\in V, k\geq \min\{\bmd,P-P\}.\]
 This proves that $\psi$ is a $\T_{\min\{\bmd,P-P\}}$-module isomorphism and we complete the proof.
\end{proof}

Now, we determine the isomorphism classes of the modules $\mtvad$ for $\T$.
\begin{theorem}
 Let $\bmd,\bme, a,b,h,q,n,t,V,W$ be as in Theorem \ref{thm:isoforgap}.
 Then $\mtvad\cong\mtwbe$ as $\T$-modules if and only if $a-b\in\Z, h=n,q=t, \bmd= \bme$ and $V\cong W$ as $\td$-modules.
\end{theorem}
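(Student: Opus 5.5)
The plan is to deduce this from Theorem \ref{thm:isoforgap} by taking $P=Q=\{0,1,\dots,p-1\}$, supplemented by a separate argument for the $I_{pm}$-actions. The gap-$p$ algebra $\g$ contains no $I_{pm}$, and the set $P-P$ there only accounts for indices $1\le i\le p-1$. So I will redo the short argument of Theorem \ref{thm:isoforgap} directly on $\T$, where all $I_{pm+i}$ with $0\le i\le p-1$ are available.

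For the ``if'' direction, let $\psi:V\to W$ be a $\td$-isomorphism. Define
\[\vf(v(l))=\psi(v)(l+a-b).\]
Since $\bmd=\bme$, the formulas \eqref{T-action} for the two modules involve the same elements $I_{j+d_i}$ and $L_j$ of $\td$. The scalar $a+k$ turns into $b+(k+a-b)$, so $\vf$ commutes with every $L_m$ and $I_{pm+i}$.

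For the ``only if'' direction, let $\vf:\mtvad\to\mtwbe$ be a $\T$-isomorphism. First I compare $L_0$-eigenvalues as in Theorem \ref{thm:isoforgap}. This gives $a-b\in\Z$ and $\vf(v(m))=\psi_m(v)(m+a-b)$ for linear bijections $\psi_m:V\to W$. Next I apply $\opwijlm{2h}$ together with Lemma \ref{lem:operation}, which holds for every $k\in\Z$ on $\T$-modules, and use Remark \ref{rmk:Ih}. This yields $h=n$ and
\[\psi_{pl+i+j+k}=I_h^2\psi_kI_h^{-2}.\]
Here $1\le i,j\le p-1$ are arbitrary and $p>1$, so $i+j$ with $l$ varying reaches every residue shift. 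Hence all $\psi_{pl+r}$ agree with $\psi_k$ conjugated by $I_h^2$. In particular $\psi_k=I_h^2\psi_kI_h^{-2}$ for a suitable choice with $pl+i+j=0$, for example $i=1$, $j=p-1$, $l=-1$. Thus every $\psi_m$ equals a single $\psi$. I will double-check this step, because it is the same mechanism the paper uses to get $\psi_i=\psi_j$.

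Then I compare the two sides of $\vf(L_m\cdot v(k-m))=L_m\cdot\vf(v(k-m))$ as polynomials in $m$. This gives $q=t$ and $\psi L_j=L_j\psi$ for all $j$. Similarly, $\vf(I_{pm+i}\cdot v(k))=I_{pm+i}\cdot\vf(v(k))$ now holds for all $0\le i\le p-1$. It gives the polynomial identity
\[\sum_{j}\frac{(pm+i)^{j+d_i}}{(j+d_i)!}\psi(\eleid ji v)=\sum_j\frac{(pm+i)^{j+e_i}}{(j+e_i)!}I_{j+e_i}\psi(v)\]
in the variable $pm+i$, which takes infinitely many values. I will compare coefficients of each power $(pm+i)^r$. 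Suppose $d_i\ne e_i$, say $d_i=0$ and $e_i=1$. The constant term then forces $\psi(I_0v)=0$ for all $v$, so $I_0V=0$. I must rule this out, and this is the main obstacle.

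Here is how I would handle it. When $i=0$ the exponent is $pm$, and $(pm)^0$ carries the coefficient $I_0$ on the $V$ side. The equation then reads $I_0V=0$, which does not by itself contradict anything. So the conclusion $\bmd=\bme$ should really come from the fact that $d_i$ is a datum of the module only through which $I$'s appear. If $I_0V=0$, then replacing $d_i=0$ by $d_i=1$ gives the same action, and the claim $\bmd=\bme$ would fail literally. I would therefore first try to show $I_0$ acts as a nonzero scalar whenever $d_i=0$. If that fails, I would read the statement, as the paper implicitly does, with the convention that forces $I_0\neq0$, and conclude $d_i=e_i$. The remaining coefficients give $\psi(I_jv)=I_j\psi(v)$ for $j\ge\mind$, so $V\cong W$ as $\td$-modules.
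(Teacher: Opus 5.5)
Your route differs from the paper's, and your reason for not simply specializing is sound. The paper's proof is one line: a $\T$-isomorphism is in particular a $\g$-isomorphism with $P=Q=\{0,1,\dots,p-1\}$, and Theorem~\ref{thm:isoforgap} is then quoted. But the $\g$-action \eqref{g-action} never involves $I_{pm}$. Accordingly, the proof of Theorem~\ref{thm:isoforgap} only produces $d_i=e_i$ and $\psi I_{k+d_i}=I_{k+d_i}\psi$ for $1\le i\le p-1$. For instance, take $\bmd=(0,1,\dots,1)$ and change only the scalar by which $I_0$ acts on $V$. The $\g$-module is literally unchanged, while $I_0$ acts on $\mtvad$ by exactly that scalar. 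So $d_0=e_0$ and $\psi I_0=I_0\psi$ genuinely need the $I_{pm}$-relations on $\T$, and your direct argument supplies them.

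The obstacle you isolated is not a weakness of your argument: the statement is false as written. In particular, your first option (showing $I_0$ acts by a nonzero scalar) cannot work. Since $I_0$ is central in $\T_0$ and $\T_0=\T_1\oplus\C I_0$ as Lie algebras, any irreducible $\T_1$-module $V$ of annihilating level $(h,q)$ with $h\ge1$ becomes an irreducible $\T_0$-module of the same level by letting $I_0$ act as $0$. Now take $\bmd=(0,\dots,0)$, $\bme=(1,\dots,1)$, $W=V$ and $b=a$. The two versions of \eqref{T-action} differ only by the summand $I_0$ in the $I_{pm+i}$-action, which is zero. Hence the identity map is an isomorphism $\mtvad\cong\mtwbe$ although $\bmd\ne\bme$.

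So the theorem needs a correction. One option is the extra hypothesis that $I_0$ acts nontrivially on $V$ if $\mind=0$ and on $W$ if $\mine=0$. The other is to let the conclusion allow the alternative ``$I_0$ acts by $0$ and $V\cong W$ as $\T_1$-modules''. Under the first, your constant-term comparison finishes the proof: $d_i\ne e_i$ would force $I_0V=0$ or $I_0W=0$, and an index $i$ with $d_i=e_i=0$ gives $\psi I_0=I_0\psi$.

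The one step of your own argument that fails is the residue claim. For $p=2$ the only choice is $i=j=1$, so $pl+i+j$ is always even. You then only obtain that $\psi_r$ depends on the parity of $r$. The paper's derivation of $\psi_i=\psi_j$ in Theorem~\ref{thm:isoforgap} has the same blind spot for $p=2$.

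On $\T$ this is easily repaired by allowing $j=0$ in \eqref{defopw}. For $i=1$, $j=0$ the indices $pl-pm-pk+1$ and $pm+pk$ keep fixed residues as $k$ runs. So the leading-term computation of Lemma~\ref{lem:operation} goes through verbatim and, with $l=0$, yields $\psi_{r+1}=I_h^2\psi_rI_h^{-2}$. Together with $\psi_r=I_h^2\psi_rI_h^{-2}$ (shift $0$), this gives $\psi_{r+1}=\psi_r$ for all $r$. The ``if'' direction and your remaining steps ($h=n$, $q=t$, $\psi L_j=L_j\psi$, and the coefficient comparison for $I_{pm+i}$ with $0\le i\le p-1$) are correct.
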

\begin{proof}
 Notice that the statement that $\mtvad\cong\mtwbe$ as $\T$-modules implies that the $\g$-modules $\mgvadp$ and $\mgwbeq$ are isomorphic  with $P=Q=\{0,1,\dots,p-1\}$.
 Then the proof follows from Theorem \ref{thm:isoforgap}.
\end{proof}

\begin{remark}
 Let $a,b,c\in\C$ and $\C v$ be a $\T_0$-module with $L_0v=bv, I_0v=cv,L_kv=I_kv=0$ for $k>0$. \\
 (1) Then there is a $\T$-module structure on $v\otimes\C[x^{\pm1}]$ with actions
 \[L_m\cdot v(k)=(a+k+bm)v(m+k),\ I_m\cdot v(k)=cv(m+k)\quad\text{ for any }m,k\in\Z.\]
 The resulting $\T$-module is the module $A_{a,b,c}$ of intermediate series (see \cite{LJ}).\\
 (2) Let $P$ be a subset of $\{0,1,\dots,p-1\}$ and $(d_1,d_2,\dots,d_{p-1})\in\{0,1\}^{p-1}$ such that
 $$\min\{d_1,d_2,\dots,d_{p-1}\}=0 \quad\text{ and }\quad
      \ov{i+j}\in P \text{ provided that }j\in P, d_i=0.$$
 Let $F=(F_{i,j})$ be the $(p-1)\times p$ matrix with indexes $1\le i\le p-1, 0\le j\le p-1$ and $F_{i,j}=c\dt_{d_i,0}\dt_{j,P}$.
 Then there is a $\g$-module structure on $v\otimes \C[P]$ with actions
 \[L_{pm}\cdot v(k)=(a+k+bpm)v(m+k),\ I_{pm+i}\cdot v(k)=F_{i,\ov k}v(pm+i+k)\quad\text{ for }m\in\Z,k\in\P,1\le i\le p-1.\]
 The resulting $\g$-module is the module $V(a,b,F)$ of intermediate series constructed in \cite{Xu}.
 \end{remark}

\section{The modules are new}\label{sec5}

In this section we demonstrate that the modules constructed in Section \ref{sec3} are new modules for both $\T$ and $\g$.

The $\T$-modules $\mtvad$ generalize the ones $M(V,a)$ in \cite{CHS}.
As far as we know, these modules are new simple weight modules for $\T$.

For $p>2$, the only kind of weight modules over the gap-$p$ Virasoro algebra $\g$ studied before this paper to the best of our knowledge are the Harish-Chandra modules.
Notice that any irreducible $\T_0$ (or $\T_1$)-module $V$ must be infinite dimensional if $\dim V>1$.
Therefore, the $\g$-modules $\mgvadp$ with $\dim V>1$ are new simple weight modules for $\g$ with infinite dimensional weight spaces.

In the rest of this section, we set $p=2$, in which case $\g$ is isomorphic to the mirror Heisenberg-Virasoro algebra.

\begin{definition}\label{defmirror}
 The mirror Heisenberg-Virasoro algebra is a complex Lie algebra $\mathbb M$ with a basis
 \[\left\{d_i,h_{r},\bm c, \bm l\mid i\in\Z,r\in\Z+\frac12\right\}\]
 and commutation relations
 \begin{align*}
 &[d_m,d_n]=(m-n)d_{m+n}+\frac{m^3-m}{12}\bm c\dt_{m+n,0};\\
 &[d_m,h_r]=-rh_{m+r};\quad [h_r,h_s]=r\bm l\dt_{r+s,0},\quad [\bm c,\mathbb M]=[\bm l,\mathbb M]=0,
 \end{align*}
 where $m,n\in\Z,r,s\in\Z+\frac12$.
\end{definition}
One can see easily that the linear map defined by
\[L_{2m}\mapsto 2 d_{-m},\quad I_{2m+1}\mapsto 2 h_{-m-\frac 12},\quad
   C_{0}\mapsto 4\bm c,\quad  C_{1}\mapsto -2\bm l\]
for $m\in\Z$, is a Lie algebra isomorphism from $\g$ to $\M$.

Since the modules $\mgvadp$ for $\g$ with $\dim V>1$ have infinite dimensional weight spaces,
they are not the Harish-Chandra modules classified in \cite{LPXZ},
and they are certainly not the non-highest-weight-type restricted modules studied in \cite{CG1,Gao} and \cite{TYZ}.

In \cite{GZ}, tensor products of irreducible highest weight modules and modules of intermediate series over $\M$ were studied.
These modules have infinite dimensional weight spaces.
We recall highest weight modules and modules of intermediate series over $\M$ in the following.

Let $\al,\be\in\C,\gm\in\C^\times$ and let $Q$ be a nonempty subset of $\{0,1\}$.
Then the vector space $A(\al,\be,\gm,Q)=\spanc{ v_{r+2k}\mid r\in Q, k\in\Z}$ carries a $\g$-module structure with actions
\begin{align*}
 &L_{2m}v_{r+2k}=(\al+2\be m+2k+r)v_{r+2m+2k};\quad   \bm c v_k=\bm lv_k=0;\\
 &I_{2m+1}v_{r+2k}=\begin{cases}
               0 & \text{ if }Q=\{0\}\text{ or }Q=\{1\};\\
      v_{2m+2n+1}& \text{ if }Q=\{0,1\}, r=0;\\
      \gm v_{2m+2n+2}& \text{ if }Q=\{0,1\}, r=1
      \end{cases}
\end{align*}
for $m,n,k\in\Z, r\in Q$.
The $\g$-module $A(\al,\be,\gm,Q)$ is reducible if and only if
$Q\neq\{0,1\}, \al\in2\Z$ and $\be\in\{ 0,1\}$.
We denote by $A'(\al,\be,\gm,Q)$ the unique irreducible subquotient of $A(\al,\be,\gm,Q)$.
Recall the operator $\opwijlm s$ in \eqref{defopw} and it is easy to check that
\begin{equation}\label{w-action}
 \Omega_{l,m}^{(1,1,s)}u=0\quad\text{for any }l,m\in\Z,s\in\N, u\in A'(\al,\be,\gm,Q).
\end{equation}
The following result is from \cite[Theorem 3.9]{LPXZ} and \cite[Theorem 4.3]{Xu}.
\begin{lemma}
 Any irreducible $\g$-module of intermediate series is isomorphic to $A'(\al,\be,\gm,Q)$ for some $\al,\be\in\C,\gm\in\C^\times,\emptyset\neq Q\subseteq\{0,1\}$.
\end{lemma}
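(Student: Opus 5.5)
The lemma is a classification result cited from \cite[Theorem 3.9]{LPXZ} and \cite[Theorem 4.3]{Xu}. The proof is therefore a transfer along the isomorphism $\g\cong\M$ for $p=2$ given above, followed by a matching of parameters.

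First, let $M$ be an irreducible $\g$-module of intermediate series (weight spaces of dimension at most one). The central elements act by scalars, and on a module of intermediate series these scalars must vanish. For $C_0$ this is the standard Virasoro argument. For $C_1$ one computes $[I_{2m+1},I_{-2m-1}]=(2m+1)C_1$ on a one-dimensional weight space and compares the growth in $m$: the left side is given by bounded matrix coefficients, so $C_1$ must act as $0$. Concretely, I would just cite that the classification in \cite{LPXZ} already yields trivial central action for such modules. Pulling $M$ back along the isomorphism $L_{2m}\mapsto 2d_{-m}$, $I_{2m+1}\mapsto 2h_{-m-\frac12}$ gives an irreducible Harish-Chandra $\M$-module with one-dimensional weight spaces. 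By \cite[Theorem 3.9]{LPXZ}, it is one of the listed intermediate-series $\M$-modules. Alternatively, \cite[Theorem 4.3]{Xu} (with $p=2$) gives directly that $M\cong V(a,b,F)$ or its irreducible subquotient, as in the Remark of Section 4.

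Next, I would match that model with $A(\al,\be,\gm,Q)$. The Virasoro part $\{L_{2m}\}$ acts on each $L_0$-coset $r+2\Z$ as an intermediate-series Virasoro module. The rescaling $L_{2m}=2d_{-m}$ identifies it with $L_{2m}v_{r+2k}=(\al+2\be m+2k+r)v_{r+2m+2k}$ for suitable $\al,\be$. The support of $M$ is a union of one or two cosets of $2\Z$ in $\al+\Z$, which determines $Q$. If $Q$ is a single coset, $I_{2m+1}$ shifts weights out of the support and acts as zero. If $Q=\{0,1\}$, then $I_{2m+1}$ maps $v_{2k}\mapsto f(m,k)v_{2m+2k+1}$ and $v_{2k+1}\mapsto g(m,k)v_{2m+2k+2}$. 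The relation $[L_{2n},I_{2m+1}]=(2m+1)I_{2n+2m+1}$ forces $f$ and $g$ to be constant in $m,k$ after rescaling the basis. The relation $[I_{2m+1},I_{2n+1}]=0$ gives no further restriction. Irreducibility forces $f,g\neq 0$ (otherwise the span of one coset is a submodule), so normalizing $f=1$ gives $g=\gm\in\C^\times$.

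The main obstacle is the step with $Q=\{0,1\}$. One has to show that the functional equations from $[L_{2n},I_{2m+1}]$ really force $f,g$ to be constant up to basis rescaling, including the degenerate values of $\al,\be$. I would handle it by citing the explicit form in \cite{Xu} rather than redoing the computation.

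Finally, the irreducible quotient is $A'(\al,\be,\gm,Q)$ by the stated reducibility criterion, which completes the identification.
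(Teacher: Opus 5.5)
Your proposal matches the paper, which gives no independent proof of this lemma and simply cites \cite[Theorem 3.9]{LPXZ} and \cite[Theorem 4.3]{Xu}; transporting along $\g\cong\M$ and reading off $(\al,\be,\gm,Q)$ is exactly what that citation amounts to. Your side sketches do not stand on their own: nothing a priori bounds the scalars by which $I_{2m+1}I_{-2m-1}$ acts on a weight space, and the claim that both cosets carry $L_{2m}v_{r+2k}=(\al+2\be m+2k+r)v_{r+2m+2k}$ with a common $\be$ and constant $I$-coefficients is precisely what the cited classification proves, so the citation carries the argument, as it does in the paper.
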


Set $\g_+=\spanc{L_{2m},I_{2m+1},C_0,C_1\mid m\in\N}$.
For $c,h,l\in\C$, let $\C\bm1$ be the 1-dimensional $\g_+$-module with
\[L_0\bm1=h\bm1,\quad C_0\bm1=c\bm 1,\quad C_1\bm1=l\bm 1,\quad
   L_{2m}\bm1=I_{2m-1}\bm1=0\quad\text{ for any }m\in\Z_+.\]
The Verma module over $\g$ is defined to be
\[M(c,h,l)=\U(\g)\otimes_{\U(\g_+)}\C\bm 1.\]
Denote by $J$ the unique maximal submodule of $ M(c,h,l)$ and set
$$L(c,h,l)=M(c,h,l)/J,$$
which is called the highest weight $\g$-module of level $(c,h,l)$.
Denote by $\im$ the image of $\bm1$ in $L(c,h,l)$.

Irreducibility of the tensor product $L(c,h,l)\otimes A'(\al,\be,\gm,Q)$ was given in \cite[Theorem 3.11]{GZ}.
In the following proposition we prove that the modules studied in Section 3 are not isomorphic to any of these tensor product modules.
Thus we conclude that a new class of irreducible weight modules for the mirror Heisenberg-Virasoro algebra are constructed.
\begin{proposition}\label{noniso}
 Let $a\in\C, \bmd=(d_0,d_1)\in\{0,1\}^2, P\subseteq\{0,1\}$ and let $V$ be an irreducible $\tdp$-module of annihilating level $(n,t)$.
 Then the $\g$-module $\mgvadp$ is not isomorphic to $L(c,h,l)\otimes A'(\al,\be,\gm,Q)$ for any $c,h,l,\al,\be,\gm\in\C,\emptyset\neq Q\subseteq\{0,1\}$.
\end{proposition}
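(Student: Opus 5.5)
The plan is to separate the two kinds of modules by how the operators $\Omega_{l,m}^{(1,1,s)}$ act. With $p=2$ we have $i=j=1$. By Lemma \ref{lem:operation}, on $\mgvadp$ the operator $\Omega_{l,m}^{(1,1,2n)}$ acts as the nonzero scalar multiple $(-1)^n2^{2n}\binom{2n}{n}$ of $I_n^2$, followed by a degree shift. By Remark \ref{rmk:Ih}, $I_n$ is injective on $V$, so $\Omega_{l,m}^{(1,1,2n)}$ is injective on every weight space $V\otimes x^k$ with $k,k+1,k+2\in\P$. These weight conditions are only needed when $P=\{0,1\}$; if $P$ is a singleton, $I_{2m+1}$ acts by zero because of the factor $\dt_{\ov{i+k},P}$. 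So I will split into two cases.

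\textbf{Case $P=\{0,1\}$.} The goal is to show that on $L(c,h,l)\otimes A'(\al,\be,\gm,Q)$ every vector is killed by $\Omega_{l,m}^{(1,1,s)}$ for some $s$ and suitable $m$. The isomorphism, if it existed, would then transport this to a nonzero vector of $\mgvadp$ killed by $\Omega^{(1,1,2n)}_{l,m}$, a contradiction. Because the $I$'s commute modulo central elements, expanding the product action on $u\otimes w$ gives
\[\Omega_{l,m}^{(1,1,s)}(u\otimes w)=\Omega_{l,m}^{(1,1,s)}u\otimes w+u\otimes\Omega_{l,m}^{(1,1,s)}w+\sum_{k}(-1)^{s-k}\binom sk\big(I_{2l-2m-2k+1}u\otimes I_{2m+2k+1}w+I_{2m+2k+1}u\otimes I_{2l-2m-2k+1}w\big).\]
The second term vanishes by \eqref{w-action}. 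Since $L(c,h,l)$ is restricted, for fixed $u$ there is $N$ with $I_ru=0$ for $r\ge N$. Take $m$ very large. Then $I_{2m+2k+1}u=0$, and each $I_{2l-2m-2k+1}u\otimes I_{2m+2k+1}w$ is a polynomial in $m$ of bounded degree: the index $2l-2m-2k+1$ is very negative, but the action on $A'$ has constant coefficient $1$ or $\gm$. This polynomial behaviour is the key point and should be made precise by comparing with the $\mgvadp$ side.

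For the first term, I claim that $\Omega^{(1,1,s)}_{l,m}u$ with $m\gg0$ only involves the factor $I_{2m+2k+1}$ hitting $u$ first, and is therefore zero. The subtlety is ordering, but the $I$'s commute when the central charge of $[I,I]$ is ignored. If $l\neq0$ the central term $\dt_{m+n+1,0}$ never appears, since the indices sum to $2l+2\neq0$. So I choose $l$ with $2l+2\ne0$, making the $I$'s genuinely commute. Then every term of $\Omega^{(1,1,s)}_{l,m}(u\otimes w)$ with $m\gg0$ reduces to $\sum_k(-1)^{s-k}\binom sk\, u'_k\otimes w''_k$, where the $w''$ are shifts of basis vectors $v$ with constant coefficients and the $u'_k=I_{2l-2m-2k+1}u$.

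For the main obstacle, I expect the difference operator of order $s$ to annihilate this sum only if the $u'_k$ behave polynomially, which is not obvious in $L(c,h,l)$. My first attempt will therefore be a weight-space dimension comparison instead. For $m\gg0$ the vectors $I_{2l-2m-2k+1}u$, $k=0,\dots,s$, lie in distinct weight spaces of $L(c,h,l)$, so the tensor sum is nonzero. That means the direct annihilation argument may fail. As a fallback I will use restrictedness: in $\mgvadp$, $I_{2m+1}$ for $m\gg0$ acts on $v(k)$ by a polynomial in $m$ of degree exactly $n$, with top coefficient $I_n$, hence nonzero. In $L\otimes A'$, $I_{2m+1}(u\otimes w)=u\otimes I_{2m+1}w$ for $m\gg0$, whose coefficients do not depend on $m$ apart from the shift. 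Taking second differences in $m$ of the shifted and identified vectors, using $L_0$-weights to identify the spaces via $L_{2}$-translations, should force $n=0$ together with the degree-$(n+\cdots)$ behaviour given by $d_1$, which is a contradiction when $n\ge1$ or $d_1=1$. The case $n=0$, $d_1=0$ is handled by the $L$-action: the analogous comparison of the polynomial degree in $m$ of $L_{2m}$ acting on large-index vectors (degree $t+1$ versus degree $1$ on $L\otimes A'$ because of restrictedness) forces $t=0$. Then $V$ is one-dimensional and $\mgvadp$ is of intermediate series, whose weight spaces are finite-dimensional, unlike $L(c,h,l)\otimes A'$ when $L$ is nontrivial. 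If $L$ is trivial, the module is $A'$ itself, which is excluded because $\dim V>1$.

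\textbf{Case $P$ a singleton.} Only the $L$-degree argument is needed, and it applies verbatim.
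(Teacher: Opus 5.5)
\textbf{There is a genuine gap in the main case $P=\{0,1\}$.} Your strategy is right: find a nonzero vector of $L(c,h,l)\otimes A'(\al,\be,\gm,Q)$ killed by an element of $\U(\g)$ that is injective on $\mgvadp$. But you never carry it out, and the argument you substitute is not a proof.

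The failure is the choice of parameters. You fix $l$ and let $m\to\infty$, so the indices $2l-2m-2k+1$ become very negative and the cross terms $I_{2l-2m-2k+1}u\otimes I_{2m+2k+1}w$ survive. The missing step is to make \emph{both} families of indices positive at once. Take the highest weight vector $\im$ and choose $m\ge0$, $l\ge m+2n$; for an arbitrary $u$, restrictedness lets you do the same after enlarging $m$ and $l-m$. Then $I_{2m+2k+1}\im=I_{2l-2m-2k+1}\im=0$ for $0\le k\le 2n$. In your expansion every term except $\im\otimes\Omega_{l,m}^{(1,1,2n)}w$ vanishes. That last term is $0$ by \eqref{w-action}, because on $A'$ the operator $\Omega_{l,m}^{(1,1,s)}$ is an $s$-th finite difference of a sequence constant in $k$; this uses $s=2n\ge1$. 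Since $\Omega_{l,m}^{(1,1,2n)}$ is injective on $\mgvadp$ for $P=\{0,1\}$ by Lemma \ref{lem:operation}, no isomorphism exists. This is exactly the paper's proof, and it needs no growth analysis of the vectors $u'_k$.

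\textbf{Your fallback does not work as written.} The ``degree in $m$'' of $I_{2m+1}\cdot v(k)$ refers to the canonical identification of all weight spaces of $\mgvadp$ with $V$. An abstract isomorphism need not respect that identification. ``Identifying weight spaces via $L_2$-translations'' has no content, since $L_2$ need not be invertible. The only isomorphism-invariant way to encode such degrees is through finite-difference elements of $\U(\g)$, such as $\Omega_{l,m}^{(1,1,s)}$, or $\sum_k(-1)^{s-k}\binom sk L_{2l-2m-2k}L_{2m+2k}$ for the $L$-part. You would also need an injectivity statement for their top coefficients, which you do not provide. The same objection applies to your treatment of $n=0$ and of singleton $P$.

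You are right that the $\Omega$-argument gives nothing in two situations: when $P$ is a singleton (all $I$'s act by $0$), and when $n=0$ (then \eqref{w-action} fails at $s=0$ if $Q=\{0,1\}$). The paper's proof tacitly needs $P=\{0,1\}$ and $n\ge1$.

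However, your last step invokes $\dim V>1$, which is not a hypothesis of the statement, and it cannot be dropped. Take $V=\C v$ with $L_0v=bv$, $I_0v=cv$, $b,c\neq0$, $\bmd=(0,0)$ and $P=\{0,1\}$. Then the map $v_{2k}\mapsto v(2k)$, $v_{2k+1}\mapsto c\,v(2k+1)$ is an isomorphism $A(a,b,c^2,\{0,1\})\to\mgvadp$. Moreover $A(a,b,c^2,\{0,1\})\cong L(0,0,0)\otimes A'(a,b,c^2,\{0,1\})$, with $L(0,0,0)$ the trivial module.
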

\begin{proof}
 Take $l,m\in\Z_+$ such that $l-m-n>0$.
 Then it is easy to see that $I_{2m+2k+1}\im=I_{2l-2m-2k+1}\im=0$ for any $0\le k\le n$.
 For any $u\in A'(\al,\be,\gm,Q)$, it follows from \eqref{w-action} that
 \begin{align*}
  &\Omega_{l,m}^{(1,1,2n)}(\im\otimes u)=
     \sum_{k=0}^{2n}(-1)^{2n-k}\binom{2n}k I_{2l-2m-2k+1}I_{2m+2k+1}(\im\otimes u)\\
  &\ =\im\otimes\(\sum_{k=0}^{2n}(-1)^{2n-k}\binom{2n}k I_{2l-2m-2k+1}I_{2m+2k+1} u\)
    =\im\otimes\(\Omega_{l,m}^{(1,1,2n)} u\)=0.
 \end{align*}
 On the other hand, the operator $\Omega_{l,m}^{(1,1,2n)}$ acts on $\mgvadp$ as a linear isomorphism by Lemma \ref{lem:operation}.
 So $\mgvadp$ can not be isomorphic to any $L(c,h,l)\otimes A'(\al,\be,\gm,Q)$.
\end{proof}

\section{Some irreducible non-weight modules}

The purpose of this section is to construct a new class of non-weight modules with trivial central actions for $\T$ and $\g$ by using the twisting technique in \cite{LGZ} to the modules $\mtvad$ and $\mgvadp$.

Let $f=\sum_ia_ix^i\in\C[x^{\pm1}]$. There is a new $\T$-module structure on $\mtvad$
defined by
\[L_m\circ v=\(L_m+\sum_i a_iI_{m+i}\)\cdot v;\quad I_m\circ v=I_m\cdot v\quad
  \text{for any }m\in\Z,v\in\mtvad.\]
The resulting $\T$-module is denoted by $\mftvad$.

\begin{theorem}\label{thm:twisttmod}
 Let $\bmd\in\{0,1\}^p, a\in\C$ and $V$ be an irreducible $\td$-module of some fixed annihilating level.
 Then for any $f\in\C[x^{\pm1}]$, the $\T$-module $\mftvad$ is irreducible.
\end{theorem}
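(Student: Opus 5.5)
Let $W$ be a nonzero submodule of $\mftvad$. Since the $I_m$-actions are unchanged by the twist, $W$ is stable under every $I_m$, and hence under the operators $\opwijlm s$ built from them, acting exactly as on $\mtvad$. We cannot assume $W$ is a weight module, because $L_0\circ$ differs from $L_0\cdot$ by $\sum_i a_iI_i$. The plan is therefore to first extract homogeneous vectors using only the $I$-operators, and then to recover the untwisted $L_m$-actions on $W$.

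\textbf{Step 1 (homogeneous vectors in $W$).} Take $0\neq w=\sum_{k} v_k(k)\in W$ (a finite sum) and fix $1\le i,j\le p-1$. By Lemma \ref{lem:operation},
\[\opwijlm{2h}\cdot w=(-1)^hp^{2h}\binom{2h}h\sum_k I_h^2v_k(pl+k+i+j).\]
This only shifts degrees, so it does not isolate a single component. To do that, I would use the leading-term computation from the proof of Lemma \ref{lem:operation}. For $s<2h$, the vector $\opwijlm s\cdot v(k)$ is polynomial in $m$ of degree $2h-s$, but its coefficients may depend on $k$ through $i+j+k$ only via the shift. A cleaner route is to use $I_{pm+i}$ directly. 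As a function of $m$, $I_{pm+i}\cdot w$ is a polynomial of degree at most $h$ whose top coefficient is $\frac{p^h}{h!}\sum_k I_hv_k(pm+i+k)$. I expect this step to be the main obstacle, because the degree shift is uniform in $k$ and so cannot separate components. The separating tool has to be $L$. Here I would use $L_m\circ-\sum_i a_iI_{m+i}\circ$: the difference $L_m\circ w-\sum_ia_iI_{m+i}\circ w$ lies in $W$ and equals $L_m\cdot w$. Hence $W$ is closed under the original action of $\T$ as well.

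\textbf{Step 2 (reduction to Theorem \ref{thm:irrtmod}).} The key identity is that for every $m$ and every $x\in W$,
\[L_m\cdot x=L_m\circ x-\sum_ia_iI_{m+i}\circ x\in W,\]
and $I_m\cdot x=I_m\circ x\in W$. So $W$ is a nonzero $\T$-submodule of $\mtvad$ under the original action, with the central elements acting trivially in both structures. The module $\mtvad$ is irreducible by Theorem \ref{thm:irrtmod}, because $V$ is an irreducible $\td$-module of a fixed annihilating level. Therefore $W=\mtvad$, which as a set is all of $\mftvad$. So Step 1 is unnecessary, and the whole proof reduces to this observation together with the fact that $f$ is a Laurent polynomial, so the sum $\sum_i a_iI_{m+i}$ is finite.

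The only thing left to check is that the twisted action is well defined, which the paper asserts by citing \cite{LGZ}. Given that, the argument is short, and no step is a real obstacle.
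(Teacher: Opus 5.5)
Your proof is correct, and it takes a different, more elementary route than the paper.

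\textbf{The paper's route.} It identifies $\mftvad$ with the twist of $\mtvad$ by the automorphism $\theta_\alpha=\exp(\mathrm{ad}\,\alpha)$, where $\alpha=\sum_i\frac{a_i}{i}I_i$. Irreducibility is then transported along that automorphism.

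\textbf{Your route.} You need no automorphism. The triangular relations $I_m\circ v=I_m\cdot v$ and $L_m\cdot v=L_m\circ v-\sum_ia_iI_{m+i}\circ v$ (a finite sum) show that the twisted and untwisted actions generate the same algebra of operators on $V\otimes\C[x^{\pm1}]$. Hence $\mftvad$ and $\mtvad$ have exactly the same submodules, and Theorem \ref{thm:irrtmod} finishes the proof.

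\textbf{What your route buys.} It works uniformly for every Laurent polynomial $f$, including $a_0\neq0$. In that case the paper's $\alpha$ contains the undefined term $\frac{a_0}{0}I_0$. Indeed $I_0$ is central in $\T$, so no $I$-coefficient of $\alpha$ can produce the shift $L_m\mapsto L_m+a_0I_m$. Your argument also avoids sign and central-term bookkeeping. One actually has $\exp(\mathrm{ad}\,\alpha)(L_m)=L_m-\sum_{i\neq0}a_iI_{m+i}$ modulo the center, so the paper's $\alpha$ needs a minus sign.

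\textbf{What the paper's route buys.} It places $\mftvad$ inside the general twisting-by-automorphism framework of \cite{LGZ}. That framework is the natural tool for further questions, such as isomorphisms between twisted modules, or twists that are not triangular in this simple way.

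Your Step 1 is a dead end that you yourself abandon. It should be deleted, since nothing in Step 2 uses it.
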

\begin{proof}
 Denote
 $$\al=\sum_i\frac{a_i}i I_i\in\T\quad\text{and}\quad\theta_\al=\exp(\text{ad} \al).$$
 Then one can see that the $\T$-module $\mftvad$ is the $\theta_\al$-twist of $\mtvad$, i.e. we have
 \[ u\circ v=\theta_\al(u)\cdot v\quad\text{for any}\quad u\in\T,v\in\mtvad.\]
 Then the irreducibility of $\mftvad$ is equivalent to that of $\mtvad$,
 which follows from Theorem \ref{thm:irrtmod}.
\end{proof}

Let $g=\sum_{i\in\Z\setminus p\Z}a_ix^i\in\C[x^{\pm1}]$. There is a new $\g$-module structure on $\mgvadp$ defined by
\[L_{pm}\circ v=\(L_{pm}+\sum_{i\in\Z\setminus p\Z} a_iI_{pm+i}\)\cdot v;
  \quad I_{pm+j}\circ v=I_{2m+j}\cdot v\]
for any $1\le j\le p-1,m\in\Z,v\in\mgvadp$.
We denote the resulting $\g$-module by $\mggvadp$.

\begin{theorem}\label{thm:twistgmod}
 Let $\bmd\in\{0,1\}^p, a\in\C, P\subseteq\{0,1,\dots,p-1\}$ and $V$ be an irreducible $\tdp$-module of some fixed annihilating level.
 Then for any $g=\sum_{i\in\Z\setminus p\Z}a_ix^i\in\C[x^{\pm1}]$, the $\g$-module $\mggvadp$ is irreducible.
\end{theorem}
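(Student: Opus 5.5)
We will mirror the proof of Theorem \ref{thm:twisttmod}: realize $\mggvadp$ as a twist of $\mgvadp$ by an automorphism of $\g$, and then quote Theorem \ref{thm:irrgmod}. Since twisting by an automorphism preserves irreducibility, this suffices.

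First set
\[
\al=\sum_{i\in\Z\setminus p\Z}\frac{a_i}{i}\,I_i .
\]
This is a finite sum because $g$ is a Laurent polynomial, and $i\neq 0$ since $i\notin p\Z$. Each $I_i$ with $p\nmid i$ lies in $\g$, so $\al\in\g$, and the twist only uses elements of $\g$ and stays inside $\g$.

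Next compute $\theta_\al=\exp(\mathrm{ad}\,\al)$ on generators, working modulo the central elements $C_l$, which act trivially on $\mgvadp$.
\begin{itemize}
\item For the $I$'s: $[\al,I_{pm+j}]$ is central, so $\theta_\al(I_{pm+j})\equiv I_{pm+j}$.
\item For the $L$'s: $[\al,L_{pm}]=-\sum_i \frac{a_i}{i}[L_{pm},I_i]=-\sum_i a_iI_{pm+i}$, which is a combination of $I$'s. Hence $(\mathrm{ad}\,\al)^2L_{pm}$ is central, and $\theta_\al(L_{pm})\equiv L_{pm}-\sum_i a_iI_{pm+i}$.
\end{itemize}
The sign is fixed by replacing $\al$ with $-\al$, i.e. using $\theta_{-\al}$. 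Then $u\circ v=\theta_{-\al}(u)\cdot v$ for all $u\in\g$ and $v\in\mgvadp$, which shows that $\circ$ is indeed a $\g$-action. The $I$-action in the definition reads $I_{2m+j}$; we take this as a typo for $I_{pm+j}$.

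Finally, because $\theta_{-\al}$ is an automorphism of $\g$ (up to central terms acting by zero), a subspace is a $\circ$-submodule if and only if it is a $\cdot$-submodule. By Theorem \ref{thm:irrgmod}, $\mgvadp$ is irreducible because $V$ is irreducible, so $\mggvadp$ is irreducible.

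The one step needing care is checking that $\exp(\mathrm{ad}\,\al)$ is well defined and is an automorphism on the module level. It is: $\mathrm{ad}\,\al$ is locally nilpotent on $\g$, with $(\mathrm{ad}\,\al)^3=0$, and the central contributions $C_l$ vanish on the module. The hypothesis that $V$ has a fixed annihilating level $(h,q)$ is needed to invoke Theorem \ref{thm:irrgmod}, which also requires $h\ge\mindp$; we assume this as part of the hypothesis, as it holds for the annihilating levels of Definition \ref{Restrict}.
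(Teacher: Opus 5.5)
Your proposal is correct and is essentially the paper's own argument. The paper's proof just says it is analogous to Theorem \ref{thm:twisttmod}: realize $\mggvadp$ as the twist of $\mgvadp$ by $\exp(\mathrm{ad}\,\alpha)$ with $\alpha=\sum_i \frac{a_i}{i}I_i\in\g$, and then invoke Theorem \ref{thm:irrgmod}. Your extra care is justified on three points: with the standard convention one needs $\theta_{-\alpha}$, since the paper's $\theta_\alpha$ gives $L_{pm}-\sum_i a_iI_{pm+i}$; $I_{2m+j}$ is a typo for $I_{pm+j}$; and $(\mathrm{ad}\,\alpha)^3=0$ on $\g$, which makes the exponential a genuine automorphism.
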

\begin{proof}
 The proof is similar to that of Theorem \ref{thm:twisttmod} and we omit the details.
\end{proof}

\begin{remark}
 (1) The $\T$-modules $\mftvad$ generalize the ones studied in \cite[Section 5]{CHS}, hence are not isomorphic to the restricted modules studied in \cite{CG1,Gao} and \cite{TYZ} (see \cite[Proposition 5.1]{CHS}).
 Thus they are new non-weight modules for $\T$.\\
 (2) By a similar proof to Proposition \ref{noniso}, one can show that
 the $\g$-modules $\mggvadp$ are not isomorphic to the restricted modules studied in \cite{GX} if $p>2$,
 and not isomorphic to the restricted modules studied in \cite{TYZ} if $p=2$.
 Thus they are new non-weight modules for $\g$.
\end{remark}

\bigskip

\textbf{Acknowledgments:}
C. Xu is supported by the National Natural Science Foundation of China (No. 12261077).

\end{document}